\theoremstyle{plain}
\newtheorem{theorem}{\bf Theorem}[section]
\newtheorem{proposition}[theorem]{\bf Proposition}
\newtheorem{lemma}[theorem]{\bf Lemma}
\newtheorem{corollary}[theorem]{\bf Corollary}
\theoremstyle{definition}
\newtheorem{example}[theorem]{\bf Example}
\newtheorem{definition}[theorem]{\bf Definition}
\newtheorem{remark}[theorem]{\bf Remark}
\newcommand{\N}{\mathbb N}
\newcommand{\Z}{\mathbb Z}
\newcommand{\R}{\mathbb R}
\newcommand{\Q}{\mathbb Q}
\newcommand{\DP}{\negthinspace : \negthinspace}
\newcommand{\BF}{\text{\rm BF}}
\DeclareMathOperator{\spec}{spec}
\newcommand{\red}{{\text{\rm red}}}
\numberwithin{equation}{section}
\begin{document}

\title{On strongly primary monoids and domains}

\address{Institute for Mathematics and Scientific Computing\\ University of Graz, NAWI Graz\\ Heinrichstra{\ss}e 36\\ 8010 Graz, Austria }
\email{alfred.geroldinger@uni-graz.at}
\urladdr{https://imsc.uni-graz.at/geroldinger}

\address{Department of Mathematics \\ University of Haifa \\ 199 Abba Khoushy Avenue, Mount Carmel, Haifa 3498838, Israel}
\email{mroitman@math.haifa.ac.il}

\author{Alfred Geroldinger and Moshe Roitman}

\thanks{This work was supported by the Austrian Science Fund FWF, Project Number P28864-N35}

\keywords{one-dimensional local domains,  primary monoids, sets of lengths, sets of distances, local tameness}

\subjclass[2010]{13A05, 13F05, 20M13}

\begin{abstract}
	A commutative integral domain is primary if and only if it is one-dimensional and local. A   domain  is strongly primary if and only if it is local and each nonzero  principal ideal contains a power of the maximal ideal. Hence one-dimensional local Mori domains are strongly primary. We prove among other results, that if $R$ is  a domain such that the conductor $(R:\widehat R)$ vanishes, then $\Lambda(R)$ is finite, that is, there exists a positive integer $k$ such that each non-zero non-unit of $R$ is a product of at most $k$ irreducible elements. Using this result we obtain that every strongly primary domain is locally tame, and that a domain $R$ is globally tame if and only if $\Lambda(R)=\infty$. In particular, we answer Problem 38 in \cite{C-F-F-G14} in the affirmative. Many of our results are formulated for  monoids.
\end{abstract}

\maketitle

\section{Introduction} \label{1}

Factorization theory of integral domains studies  factorizations of elements and ideals (\cite{An97, Ge-HK06a, Fo-Ho-Lu13a}). Some ideal theoretic conditions (such as the ascending chain condition on principal ideals) guarantee that every non-zero non-unit element of a domain can be written as a product of atoms (irreducible elements).
The goal is to describe the non-uniqueness of factorizations by arithmetical invariants and study their relationship with classical ring theoretical invariants. Class groups and the structure of certain localizations are such algebraic invariants which control element factorizations. In case of  weakly Krull Mori domains this interplay is described by the  $T$-block monoid of the domain which is built by the $v$-class group and  localizations at height-one primes containing the conductor  (\cite[Theorem 3.7.1]{Ge-HK06a}). These localizations are one-dimensional and local, and this connection stimulated the interest of factorization theory in one-dimensional local domains.

To recall some arithmetical invariants, let $R$ be an integral domain. If $a = u_1 \cdot \ldots \cdot u_k$ is a factorization of an element $a \in R$ into atoms  $u_1, \ldots, u_k$, then $k$ is a factorization length. The set $\mathsf L (a) \subseteq \N$ of all possible factorization lengths of $a$, is called the {\it set of lengths} of $a$. The 	{\it local tame degree} $\mathsf t (u)$ of an atom $u \in R$ is the smallest $N$ with the following property: for any multiple $a$ of $u$ and any factorization $a = v_1 \cdot \ldots \cdot v_n$ of $a$ there is a  subproduct which is a multiple of $u$, say $v_1 \cdot \ldots \cdot v_m$, and a refactorization of this subproduct which contains $u$, say $v_1 \cdot \ldots \cdot v_m = u u_2 \cdot \ldots \cdot u_{\ell}$ such that $\max \{\ell, m \} \le N$. Local tameness is a central finiteness property which -- in many settings -- implies further arithmetical finiteness properties (see, e.g., \cite{Ge-HK06a}, \cite[Section 4]{Ge-Go-Tr20}).

Here are two classes of locally tame monoids:
 \begin{itemize}
	\item Krull monoids with finite class group \cite[Theorem 3.4.10]{Ge-HK06a}.
	
	\item C-monoids  \cite[Theorem 3.3.4]{Ge-HK06a}.
	\end{itemize}

Moreover,  if $R$ is a Mori domain  with non-zero  conductor  $\mathfrak f=(R\DP \widehat R)$ such that $R/\mathfrak f$ and the class group $\mathcal C ( \widehat R)$ are finite,  then   the multiplicative monoid $R^{\bullet}=R\setminus \{0\}$ is a  C-monoid by \cite[Theorem 2.11.9]{Ge-HK06a}, whence $R$ is locally tame (compare with Example \ref{3.13}.4).
Precise values for local and global tame degrees were studied for Krull monoids  and numerical monoids (\cite{C-G-L09,Om12a, C-C-M-M-P14,Ga-Ge-Sc15a}). For computational aspects  and for results in the non-commutative setting we refer to (\cite{Ba-Sm15, GS16a}).

By a {\it local} ring, we mean a commutative ring with a unique maximal ideal, not necessarily noetherian.
It is well-known that a domain $R$ is one-dimensional  and local if and only if its multiplicative monoid of non-zero elements is primary. A monoid $H$ is {\it strongly primary} if each principal ideal of $H$ contains a power of the maximal ideal.
The multiplicative monoid of a one-dimensional local Mori domain is strongly primary, and this was the key property to prove local tameness for one-dimensional local Mori domains under a variety of additional assumptions (\cite[Theorem 3.5]{Ge-Ha-Le07} and \cite[Theorem 3.5]{Ha02a}). However, the general case remained open.

In the present paper we prove that every strongly primary domain is  locally tame  and   provide a characterization of global tameness  (Theorems \ref{main} (b) and \ref{tame}.2). In particular, all one-dimensional local Mori domains turn out to be locally tame and this answers Problem 38 in \cite{C-F-F-G14} in the affirmative.
Although our present approach is semigroup theoretical over large parts (Theorem \ref{main} (a)), it also uses substantially  the ring structure, and this is unavoidable since strongly primary Mori monoids need not be locally tame as shown in  \cite[Proposition 3.7 and Example 3.8]{Ge-Ha-Le07} (see Example \ref{3.13}.1).

\section{Background on primary monoids and domains} \label{2}

We denote by $\N$ the set of positive integers and by $\N_0 = \N \cup \{0\}$ the set of non-negative integers.
For integers $a, b \in \Z$, we denote by $[a,b] = \{ x \in \Z \mid a \le x \le b\}$ the discrete interval between $a$ and $b$. Let $L, L' \subseteq \Z$ be subsets of the integers. Then $L+L' = \{a+b \mid a \in L, b \in L'\}$ denotes the sumset. A positive integer $d \in \N$ is a distance of $L$ if there is some $a \in L$ such that $L \cap [a, a+d] = \{a, a+d\}$. We  denote by $\Delta (L) \subseteq \N$ the set of distances of $L$. Thus $\Delta (L)=\emptyset$ if and only if $|L| \le 1$, and  $L$ is an arithmetical progression with difference $d$ if and only if $\Delta (L) \subseteq \{d\}$. Note  that the empty set is considered an arithmetical progression.

By a {\it monoid}, we mean a commutative cancellative semigroup with identity and by a {\it domain}, we mean a commutative integral domain.  Thus, if $R$ is a domain, then $R^{\bullet} = R \setminus \{0\}$ is a monoid. All ideal theoretic and arithmetical concepts are introduced in a monoid setting, but they will be  used both for monoids and  domains.

\smallskip
\noindent
{\bf Ideal theory of monoids and domains.} Let $H$ be a monoid and $\mathsf q (H)$ the quotient group of $H$. We denote by
\begin{itemize}
\item $H' = \{ x \in \mathsf q (H) \mid \text{there exists some $N \in \N$ such that $x^n \in H$ for all $n \ge N$} \}$ the {\it seminormal closure} of $H$, by

\item $\widetilde H = \{ x \in \mathsf q (H) \mid \text{there exists some $N \in \N$ such that $x^N \in H$} \}$ the {\it root closure} of $H$, and by

\item $\widehat H = \{ x \in \mathsf q (H) \mid \text{there exists $c \in H$ such that $cx^n \in H$ for all $n \in \N$} \}$ the {\it complete integral closure} of $H$.
\end{itemize}
Then we have
\begin{equation} \label{eq2.1}
H \subseteq H' \subseteq \widetilde H \subseteq \widehat H \subseteq \mathsf q (H) \,,
\end{equation}
and  all inclusions can be strict.
We say that $H$ is {\it seminormal} ({\it root-closed}, resp. {\it completely integrally closed}) if $H=H'$ ($H = \widetilde H$, resp. $H = \widehat H$). Note that $H'$ is seminormal, $\widetilde H$ is root-closed, but $\widehat H$ need not be completely integrally closed.

\smallskip
\begin{lemma} \label{2.1}
Let $H$ be a root-closed monoid and $x \in \mathsf q (H)$.
\begin{enumerate}
\item If $c \in H$ such that $cx^n \in H$ for some $n \in \N$, then $cx^k \in H$ for every $k \in [1,n]$.

\item We have $x \in \widehat H$ if and only if there exists $c \in H$ such that $cx^n \in H$ for infinitely many $n \in \N$.
\end{enumerate}
\end{lemma}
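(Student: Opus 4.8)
The plan is to prove both parts directly from the definitions of the root closure $\widetilde H$ and the complete integral closure $\widehat H$, using the hypothesis $H = \widetilde H$ (root-closedness) as the engine for both.

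For part (1), I would start from the element $y = cx^n \in H$ and the goal of showing $cx^k \in H$ for $k \in [1,n]$. The natural idea is to manufacture, for each such $k$, an element of $\mathsf q(H)$ whose appropriate power lands in $H$, so that root-closedness forces the element itself into $H$. Concretely, I would consider $cx^k$ and try to find an exponent $m$ with $(cx^k)^m \in H$. Writing $(cx^k)^m = c^m x^{km}$ and comparing with powers of the known element $cx^n = y \in H$ and of $c \in H$, the aim is to choose $m$ so that $c^m x^{km}$ is expressible as a product of nonnegative powers of $y$ and $c$. Since $x^{km}$ must match the $x$-exponent coming from a power $y^a = c^a x^{na}$, one needs $na = km$, i.e. the exponent bookkeeping reduces to finding nonnegative integers balancing $c$: writing $(cx^k)^m = y^a c^{\,b}$ requires $m = a + b$ and $km = na$, so $b = m - a = m(1 - k/n) \ge 0$ exactly because $k \le n$. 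Taking $m = n$ gives $a = k$, $b = n-k \ge 0$, and thus $(cx^k)^n = (cx^n)^k c^{\,n-k} = y^k c^{\,n-k} \in H$; root-closedness then yields $cx^k \in H$. This is the crux of the argument, and it is short once the exponent choice is spotted.

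For part (2), one direction is immediate: if $x \in \widehat H$ then by definition there is $c \in H$ with $cx^n \in H$ for \emph{all} $n \in \N$, hence in particular for infinitely many $n$. For the converse, suppose $c \in H$ satisfies $cx^n \in H$ for infinitely many $n$. The goal is to upgrade ``infinitely many'' to ``all,'' which is precisely what part (1) enables. Given an arbitrary $k \in \N$, I would pick some $n$ in the infinite set with $n \ge k$; then $cx^n \in H$ with $k \in [1,n]$, so part (1) gives $cx^k \in H$. Since $k$ was arbitrary, $cx^n \in H$ for every $n \in \N$, which is exactly the defining condition placing $x \in \widehat H$.

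The only real subtlety is the exponent balancing in part (1), and I expect that to be the main (though modest) obstacle: one must verify that the product $y^k c^{\,n-k}$ really equals $(cx^k)^n$ as elements of $\mathsf q(H)$ and that both factors lie in $H$ with nonnegative exponents, which is where the hypothesis $k \le n$ is used. Everything else is bookkeeping, and part (2) is essentially a formal consequence of part (1) together with the definition of $\widehat H$.
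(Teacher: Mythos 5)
Your proposal is correct and matches the paper's own argument: the key identity $(cx^k)^n = (cx^n)^k c^{\,n-k} \in H$ followed by root-closedness is exactly how the paper proves part (1), and part (2) is deduced from part (1) in the same way (the forward direction being immediate from the definition of $\widehat H$). No gaps; the exponent bookkeeping you flag as the crux is precisely the paper's one-line computation.
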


\begin{proof}
1. Let $c \in H$ and $n \in \N$ such that $cx^n \in H$. If $k \in [1,n]$, then  $(cx^k)^n = (c x^n)^k c^{n-k} \in H$ whence $cx^k \in H$ because $H$ is root-closed.

2. If there is a $c \in H$ such that $c x^n \in H$ for infinitely many $n \in \N$, then 1. implies that $c x^k \in H$ for all $k \in \N$ whence $x \in \widehat H$.
\end{proof}

\begin{lemma} \label{2.2}
Let $H$ be a monoid.
\begin{enumerate}
\item $\widehat H$ is root-closed.

\item If $(H \colon \widehat H) \ne \emptyset$, then $\widehat H$ is completely integrally closed.

\item An element $x \in \mathsf q (H)$ lies in $\widehat{\widehat H}$ if and only if there exists an element $c \in H$ such that $c x^n \in \widehat H$ for infinitely many $n \in \N$.
\end{enumerate}
\end{lemma}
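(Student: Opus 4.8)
The plan is to treat the three assertions in turn. The first observation, used throughout, is that $H \subseteq \widehat H \subseteq \mathsf q(H)$ forces $\mathsf q(\widehat H) = \mathsf q(H)$, so in all three parts the relevant $x$ lies in $\mathsf q(H)$ and can be written as $x = p q^{-1}$ with $p, q \in H$. I expect part~1 to be the only genuine obstacle; parts~2 and~3 should follow quickly, the former from a conductor element and the latter from part~1 together with Lemma~\ref{2.1}.2.

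For part~1, I would take $x \in \mathsf q(H)$ with $x^N \in \widehat H$ and aim to produce a \emph{single} $c' \in H$ with $c' x^m \in H$ for all $m \in \N$. Unwinding $x^N \in \widehat H$ gives some $c \in H$ with $c x^{Nk} \in H$ for every $k \ge 0$, so the difficulty is to fill the gaps between the exponents divisible by $N$; this is precisely where the naive approach (multiplying the known elements $c x^{Nk}$ together) fails, since those products only reach exponents that are again multiples of $N$. The key trick I would use is to exploit the denominator: writing $m = kN + r$ with $0 \le r \le N-1$, one has $c x^m q^r = (c x^{Nk})\, p^r \in H$ because both factors lie in $H$. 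Multiplying by $q^{N-1-r} \in H$ (legitimate since $N-1-r \ge 0$) then gives $c q^{N-1} x^m = q^{N-1-r}\bigl(c x^{Nk} p^r\bigr) \in H$. Hence $c' := c q^{N-1} \in H$ works simultaneously for every $m$, so $x \in \widehat H$, which is the inclusion $\widetilde{\widehat H} \subseteq \widehat H$. The whole point is that the factor $q \in H$ supplies the multiplicative padding needed to reach the exponents not divisible by $N$; without using $x \in \mathsf q(H)$ this gap cannot be closed.

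For part~2, let $f \in (H \colon \widehat H)$, so that $f \widehat H \subseteq H$ and in particular $f \in H$. Given $x \in \widehat{\widehat H}$, by definition there is $c \in \widehat H$ with $c x^n \in \widehat H$ for all $n \in \N$. Then $c' := f c \in H$ (because $c \in \widehat H$), and $c' x^n = f(c x^n) \in f \widehat H \subseteq H$ for every $n$, whence $x \in \widehat H$. Together with the trivial inclusion $\widehat H \subseteq \widehat{\widehat H}$ this yields $\widehat{\widehat H} = \widehat H$.

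For part~3, I would prove the two implications separately. The forward one is immediate from the definition of $\widehat{\widehat H}$: if $x \in \widehat{\widehat H}$ there is $c_0 \in \widehat H$ with $c_0 x^n \in \widehat H$ for all $n$; choosing $d \in H$ with $d c_0 \in H$ (possible since $c_0 \in \widehat H$), the element $c := d c_0 \in H$ satisfies $c x^n = d(c_0 x^n) \in \widehat H$ for all $n$, in particular for infinitely many. For the converse I would invoke part~1: since $\widehat H$ is root-closed, Lemma~\ref{2.1}.2 applied to $\widehat H$ in place of $H$ shows that if some element of $\widehat H$ times infinitely many powers of $x$ lies in $\widehat H$, then $x \in \widehat{\widehat H}$; and a given $c \in H$ with $c x^n \in \widehat H$ for infinitely many $n$ certainly lies in $\widehat H$, so this applies. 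The only mild subtlety is that the statement asks for $c \in H$ rather than the $c \in \widehat H$ that Lemma~\ref{2.1}.2 naturally produces, which is exactly why the forward direction needs the auxiliary denominator $d$.
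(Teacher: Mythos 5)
Your proof is correct, and for part~1 it is in essence the paper's own argument: both proofs decompose an arbitrary exponent as $m = kN + r$ with $r \in [0,N-1]$ and use a denominator of $x$ to bridge the exponents not divisible by $N$. The paper folds this padding into the choice of a single element $c \in H$ (chosen so that $cx^i \in H$ for all $i \in [1,N]$ as well as $c(x^N)^k \in H$ for all $k$) and concludes with $c^2 x^n = c(x^N)^k \, (c x^i) \in H$, whereas you make the padding explicit as $q^{N-1}$ with $x = pq^{-1}$; this is the same idea in slightly different clothing. The genuine differences are in parts~2 and~3. For part~2 the paper simply cites \cite[Proposition 2.3.4]{Ge-HK06a}, while you give a short self-contained proof, which is correct (the key observations being that $f \in H$ because $1 \in \widehat H$, and that $fc \in H$ because $f\widehat H \subseteq H$); this makes the lemma independent of the reference. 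For part~3 the paper says only that the claim ``follows from Lemma~\ref{2.1}.2 applied to the monoid $\widehat H$'', which, read literally, yields the equivalence with a witness $c \in \widehat H$ rather than the $c \in H$ demanded by the statement; you explicitly close this small gap in the forward direction by replacing the witness $c_0 \in \widehat H$ with $d c_0 \in H$ for a suitable denominator $d \in H$, noting that $d(c_0 x^n) \in \widehat H$ still holds. So your write-up is, on this point, more complete than the paper's.
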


\begin{proof}
1. Let $x \in \mathsf q (H)$ such that $x^e \in \widehat H$ for some $e \in \N$. We have to show that $x \in \widehat H$. There is an element $c \in H$ such that $c x^i \in H$ for every $i \in [1,e]$ and such that $c(x^e)^k \in H$ for all $k \in \N_0$.  If $n \in \N$, then $n = k e + i$, with $k \in \N_0$ and $i \in [1,e]$, and $c^2 x^n = c(x^e)^k (c x^i) \in H$ which implies that $x \in \widehat H$.

2. See \cite[Propositon 2.3.4]{Ge-HK06a}.

3. Since $\widehat H$ is root-closed by 1., the assertion follows from Lemma \ref{2.1}.2 (applied to the monoid $\widehat H$).
\end{proof}

Let $H$ be a monoid. Then $H^{\times}$ denotes the group of units of $H$ and $H_{\red} = \{a H^{\times} \mid a \in H \}$ its associated reduced monoid. We also let $\mathfrak m=H\setminus H^{\times}$ and $\mathfrak n=\widehat H\setminus \widehat H^{\times}$.
Let $X, Y  \subseteq \mathsf q (H)$ be  subsets. Then $X$ is an $s$-ideal of $H$ if $X \subseteq H$ and $XH =X$. We set $(X \colon Y)= \{z \in \mathsf q (H) \mid zY \subseteq X \}$ and $X^{-1} = (H \DP X)$. A {\em divisorial ideal ($v$-ideal)} is a set of the form $(X^{-1})^{-1}$ for $X\subseteq \mathsf q(H)$. We denote by $v$-$\spec (H)$ the set of all prime $v$-ideals of $H$. The monoid $H$ is a {\it Mori monoid}  if it satisfies the ascending chain condition on divisorial ideals, and it is a {\it Krull monoid} if it is a completely integrally closed Mori monoid.

\smallskip
\noindent
{\bf Arithmetic of monoids and domains.} For any set $P$, let $\mathcal F (P)$ be the free abelian monoid with basis $P$. Let $|\cdot | \colon \mathcal F (P) \to \N_0$ denote the unique epimorphism satisfying $|p|=1$ for each $p \in P$,  whence $|\cdot|$ is mapping each $z \in \mathcal F (P)$ onto its length. We denote by $\mathcal A (H)$ the set of atoms of $H$. Then $\mathsf Z (H) = \mathcal F ( \mathcal A (H_{\red}))$ is the {\it factorization monoid} of $H$ and $\pi \colon \mathsf Z (H) \to H_{\red}$ denotes the canonical epimorphism. For an element $a \in H$,
\begin{itemize}
\item $\mathsf Z (a) = \pi^{-1} (aH^{\times}) \subseteq \mathsf Z (H)$ is the {\it set of factorizations } of $a$, and

\item $\mathsf L (a) = \{ |z| \mid z \in \mathsf Z (a) \} \subseteq \N_0$ is the {\it set of lengths} of $a$.
\end{itemize}
To define the distance of factorizations, consider two factorizations $z, z' \in \mathsf Z (H)$. Then we write
\[
z= u_1 \cdot \ldots \cdot u_{\ell}v_1 \cdot \ldots \cdot v_m \quad \text{and} \quad z'= u_1 \cdot \ldots \cdot u_{\ell}w_1 \cdot \ldots \cdot w_n \,,
\]
where $\ell, m,n \in \N_0$ and all $u_i,v_j,w_k \in \mathcal A (H_{\red})$ such that $\{v_1, \ldots, v_m\} \cap \{w_1, \ldots, w_n\} = \emptyset$. We call $\mathsf d (z,z') = \max \{m,n\} \in \N_0$ the {\it distance} between $z$ and $z'$. The function $\mathsf d \colon \mathsf Z (H) \times \mathsf Z (H)\to \N_0$ is a metric on $\mathsf Z (H)$.
We say that $H$ is
\begin{itemize}
\item {\it atomic} if every non-unit can be written as a finite product of atoms, and
\item a BF-{\it monoid} (a {\it bounded factorization monoid}) if it is atomic and all sets of lengths are finite.
\end{itemize}
A monoid is a BF-monoid if and only if $\bigcap_{n \ge 0} (H \setminus H^{\times})^n = \emptyset$, and
 Mori monoids are BF-monoids (\cite[Theorem 2.2.9]{Ge-HK06a}). For every $k \in \N$, we set $\rho_k (H)=k$ if $H=H^{\times}$, and otherwise we set
\[
\rho_k (H) = \sup \{\sup \mathsf L (a) \mid a \in H, k \in \mathsf L (a) \} \in \N \cup \{\infty\} \,.
\]
Clearly, the sequence $(\rho_k (H))_{k \ge 1}$ is increasing and, if $\rho_k (H)$ is finite for some $k \in \N$, then $\rho_k (H)$ is the maximal length of a factorization of a product of $k$ atoms. The {\it elasticity} of $H$, introduced by Valenza in \cite{Va90}, is defined as
$\rho (H) = \sup \{ m/n \mid m,n \in L, L \in \mathcal L (H) \}$, and by \cite[Proposition 1.4.2]{Ge-HK06a} we have

\begin{equation} \label{elasticities}
\rho (H) = \sup \{ m/n \mid m,n \in L, L \in \mathcal L (H) \} = \lim_{k \to \infty} \frac{\rho_k (H)}{k}.
\end{equation}
For a subset $S \subseteq H$, the {\it set of distances} of $S$ is defined by
\[
\Delta (S) = \bigcup_{a \in S} \Delta \big( \mathsf L (a) \big) \quad \text{and we set} \quad \Lambda (S) = \sup \{ \min \mathsf L (a) \mid a \in S \} \in \N_0 \cup \{\infty\} \,.
\]
For $a \in H$, let $\Lambda (a) = \Lambda ( \{a\})$. If $a, b \in H$, then $\mathsf L (a) + \mathsf L (b) \subseteq \mathsf L (ab)$ whence $\min \mathsf L (ab) \le \min \mathsf L (a) + \min \mathsf L (b)$ and thus
\begin{equation} \label{Lambda-inequality}
\Lambda (ab) \le \Lambda (a) + \Lambda (b) \,.
\end{equation}

\smallskip

The {\it catenary degree} $\mathsf c (a)$ of an element $a \in H$ is the smallest $N \in \N_0 \cup \{\infty\}$ with the following property: if $z, z' \in \mathsf Z (a)$ are two factorizations of $a$, then there are factorizations $z=z_0, z_1, \ldots, z_k=z'$ of $a$ such that $\mathsf d (z_{i-1}, z_i) \le N$ for all $i \in [1,k]$.
Then
\[
\mathsf c (H) = \sup \{\mathsf c (a) \mid a \in H \} \in \N_0 \cup \{\infty\}
\]
is the {\it catenary degree} of $H$. The monoid $H$ is factorial if and only if $\mathsf c (H)=0$,  and if this does not hold, then
\begin{equation} \label{catenary}
2 + \sup \Delta (H) \le \mathsf c (H)  \quad \text{by} \quad \text{\cite[Theorem 1.6.3]{Ge-HK06a}} \,.
\end{equation}

\smallskip
\noindent
{\bf Primary monoids and domains.}
Let $H$ be a monoid  and $\mathfrak m = H \setminus H^{\times}$. Then $H$ is said to be
\begin{itemize}
\item {\it primary} if  $H \ne H^{\times}$ and for all $a, b \in \mathfrak m$ there is an $n \in \N$ such that $b^n \in aH$, and

\item {\it strongly primary} if $H \ne H^{\times}$ and for every $a \in \mathfrak m$ there is an $n \in \N$ such that $\mathfrak m^n \subseteq aH$ (we denote by $\mathcal M (a)$ the smallest $n \in \N$ having this property).
\end{itemize}
If $H$ is  strongly primary, then $H$ is a primary BF-monoid  (\cite[Lemma 2.7.7]{Ge-HK06a}).  However,  primary BF-monoids need not be strongly primary (Example \ref{3.13}.2).

\begin{lemma}\cite[Proposition 1]{Ge96} \label{2.3}
	Let $H$ be a primary monoid.
Then $H^{\times} = {H'}^{\times} \cap H = \widetilde H^{\times} \cap H = \widehat H^{\times} \cap H$.
	\end{lemma}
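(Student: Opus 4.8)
The plan is to reduce the four-fold equality to a single non-trivial inclusion, exploiting that everything will be sandwiched between two copies of $H^{\times}$. First I would record that by \eqref{eq2.1} all four monoids $H \subseteq H' \subseteq \widetilde H \subseteq \widehat H$ sit inside the common quotient group $\mathsf q(H)$, so ``unit'' means the same thing (invertible with inverse in the given monoid) throughout. From the chain of inclusions one gets the formal inclusions
\[
H^{\times} \subseteq {H'}^{\times} \cap H \subseteq \widetilde H^{\times} \cap H \subseteq \widehat H^{\times} \cap H .
\]
Indeed, if $u$ belongs to one of these sets, then both $u$ and $u^{-1}$ lie in the relevant submonoid, hence in every larger one, so $u$ is a unit of the larger monoid too, while $u \in H$ holds throughout. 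Consequently it suffices to prove the single reverse inclusion $\widehat H^{\times} \cap H \subseteq H^{\times}$, which then collapses the entire chain to $H^{\times}$.

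For that inclusion I take $x \in H$ with $x^{-1} \in \widehat H$ and aim to show $x^{-1} \in H$; I may assume $x \in \mathfrak m$, since otherwise $x \in H^{\times}$ already. By the definition of $\widehat H$ there is some $c \in H$ with $c x^{-n} \in H$ for all $n \in \N$. If $c \in H^{\times}$, then $x^{-1} = c^{-1}(c x^{-1}) \in H$, forcing $x \in H^{\times}$, a contradiction; hence $c \in \mathfrak m$. Now both $c$ and $x$ lie in $\mathfrak m$, and this is exactly where the primary hypothesis enters: it supplies some $k \in \N$ with $x^{k} \in cH$, say $x^{k} = ch$ with $h \in H$. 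Rewriting this in $\mathsf q(H)$ gives $h^{-1} = c x^{-k}$, and since $c x^{-k} \in H$ we get $h^{-1} \in H$, so $h \in H^{\times}$. Finally, from $c x^{-(k+1)} \in H$ and $c x^{-(k+1)} = (c x^{-k}) x^{-1} = h^{-1} x^{-1}$ we obtain $x^{-1} = h \cdot (h^{-1} x^{-1}) \in H$, contradicting $x \in \mathfrak m$; hence $x \in H^{\times}$, as desired.

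The main obstacle I anticipate is locating where the primary assumption must be used: after the formal reduction the whole statement rests on controlling a single witness $c$ for $x^{-1} \in \widehat H$, and the trick is to apply the primary property to the pair $(c,x)$ rather than to $x$ alone, so that the relation $x^k = ch$ turns $c$ into (essentially) the inverse of a unit via $h^{-1} = c x^{-k}$. Once the pair is chosen correctly the remaining computation is routine and uses nothing beyond the defining property of $\widehat H$ and cancellativity, so no appeal to Lemmas \ref{2.1} or \ref{2.2} is needed.
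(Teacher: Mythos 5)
Your proof is correct. Note that the paper itself gives no argument for this lemma---it is quoted from \cite{Ge96}---so there is no in-text proof to compare against; your argument stands as a valid, self-contained substitute. The formal reduction to the single inclusion $\widehat H^{\times} \cap H \subseteq H^{\times}$ is sound: by \eqref{eq2.1} all four monoids lie in $\mathsf q (H)$, inverses are computed there, and units of a smaller monoid in the chain are units of any larger one. The key step is also right: taking a witness $c \in H$ with $cx^{-n} \in H$ for all $n \in \N$, ruling out $c \in H^{\times}$ (this case split is genuinely needed, since the primary property only speaks about non-units), and then applying the primary property to the pair $(c,x) \in \mathfrak m \times \mathfrak m$ to get $x^k = ch$. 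The identity $h^{-1} = cx^{-k} \in H$ makes $h$ a unit, and $x^{-1} = h \, \bigl( cx^{-(k+1)} \bigr) \in H$ yields the contradiction. The argument uses nothing beyond the definitions of primary, of $\widehat H$, and cancellativity, so it is elementary and independent of Lemmas \ref{2.1} and \ref{2.2}, as you claim.
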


\begin{lemma}\label{prim}
Let $H$ be a primary monoid.
	\begin{enumerate}
		\item
		If   $a\in H$ and $x\in \mathsf q(H)$, then there exists an $N\in \mathbb N$
		such that $a^Nx\in H$, so  $a^nx\in H$ for all $n\ge N$. Moreover, for $n$ sufficiently large, we have both $a^nx\in H$ and $a^n\in Hx$.
		
		\item
		If $H$ is  strongly primary and $x\in \mathsf q(H)$, then, for $N\in \mathbb N$ sufficiently large, $\mathfrak m^Nx\subseteq  H$ and $\mathfrak m^N\subseteq  Hx$.
	\end{enumerate} 	
\end{lemma}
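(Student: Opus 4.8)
The plan is to reduce every assertion to the defining property of a (strongly) primary monoid by clearing denominators. Since $\mathsf q(H)$ is the quotient group of $H$, I would first fix a representation $x = bc^{-1}$ with $b,c \in H$. The point is that $a^N x \in H$ holds if and only if $a^N b \in cH$, and likewise $\mathfrak m^N x \subseteq H$ holds if and only if $\mathfrak m^N b \subseteq cH$. If $c \in H^{\times}$ then $x \in H$ and every statement is immediate, so throughout I may assume $c \in \mathfrak m$.

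For part 1 the substantive case is $a \in \mathfrak m$ (note that for a unit $a$ the conclusion is only consistent when $x \in H^{\times}$, so I restrict to $a \in \mathfrak m$). Applying the primary property to the pair $c, a \in \mathfrak m$ produces an $n \in \N$ with $a^n \in cH$, say $a^n = ch$ with $h \in H$. Then $a^n b = c(hb) \in cH$, which is exactly $a^n x = hb \in H$; this gives the first assertion with $N = n$. Monotonicity is then automatic, since for $m \ge n$ one has $a^m x = a^{m-n}(a^n x) \in H$ because $a^{m-n} \in H$.

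For the "Moreover" clause I would run the identical argument with $x^{-1} \in \mathsf q(H)$ in place of $x$, obtaining an $N' \in \N$ with $a^m x^{-1} \in H$ for all $m \ge N'$; and $a^m x^{-1} \in H$ says precisely $a^m \in Hx$. Choosing any $n \ge \max\{N, N'\}$ then secures $a^n x \in H$ and $a^n \in Hx$ simultaneously. Part 2 is the same argument with strong primarity substituted for primarity: since $c \in \mathfrak m$, the definition yields $n = \mathcal M(c)$ with $\mathfrak m^n \subseteq cH$, so $\mathfrak m^n b \subseteq cH$ and hence $\mathfrak m^n x \subseteq H$; enlarging the exponent is harmless because $\mathfrak m^{N-n} \subseteq H$. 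Applying this to $x^{-1}$ as well and passing to the larger exponent gives both $\mathfrak m^N x \subseteq H$ and $\mathfrak m^N x^{-1} \subseteq H$, the latter being exactly $\mathfrak m^N \subseteq Hx$.

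I do not anticipate a genuine obstacle here, as the proof is essentially a direct unwinding of the definitions. The only points requiring care are (i) clearing the denominator $c$ correctly so that the (strongly) primary condition can be invoked for the pair $(c,a)$, respectively for $c$ alone; (ii) obtaining the second containment by symmetry through $x^{-1}$ and then merging the two exponent bounds via a maximum; and (iii) the degenerate unit cases $c \in H^{\times}$ and $a \in H^{\times}$, which must be dispatched separately as above.
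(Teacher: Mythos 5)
Your proposal is correct and follows essentially the same route as the paper: write $x = bc^{-1}$ with $b,c \in H$, invoke the (strongly) primary property to clear the denominator, obtain the companion containment by running the identical argument on $x^{-1}$, and merge the two bounds by monotonicity. Your explicit treatment of the degenerate unit cases (and of part 2, which the paper dismisses with ``use a similar argument'') only makes explicit what the paper leaves implicit.
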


\begin{proof}
1. Let $x= bc^{-1}$, where $b,c\in H$. We have $Hb\subseteq Hbc^{-1}=Hx$. Since $H$ is primary, $Hb$ and so also $Hx$ contains a power of $a$.  Hence also $Hx^{-1}$ contains a power of $a$.  Thus for $n$ sufficiently large we have both $a^nx\in H$ and $a^n\in Hx$.
		
2. Use a similar argument as in  the previous item.
\end{proof} 	  	

If $H$ is strongly primary and  $x\in \mathsf q(H)$, we denote by $\mathcal M (x) \in \N$ the smallest $n \in \N$ with $\mathfrak m^n \subseteq xH$ (see Lemma \ref{prim}.2).

\begin{lemma} \label{2.4}
	Let $H$ be a primary monoid.
	\begin{enumerate}
		\item $H' = \widetilde H$. In particular, $H$ is seminormal if and only if $H$ is root-closed.
		
		\item
		 If $H$ is seminormal, then $ \mathfrak m \subseteq (H \colon \widehat H)$, so $\widehat H = (\mathfrak m \colon \mathfrak m)$.
		
		\item \cite[Theorem 4]{Ge93b}	
		$\widehat{\widetilde H}$ is completely integrally closed. Thus the complete integral closure of a seminormal primary monoid is completely integrally closed.
	\end{enumerate}
\end{lemma}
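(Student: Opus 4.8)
The plan is to treat the three parts in order, using the root-closedness obtained in part~1 when proving part~2; note that part~3 asserts only that $\widehat{\widetilde H}$ is completely integrally closed, which is quoted from \cite[Theorem 4]{Ge93b}, so the sole remaining observation there is that a seminormal $H$ satisfies $H=H'=\widetilde H$ by part~1, whence $\widehat H=\widehat{\widetilde H}$ is completely integrally closed. Thus the substance lies in parts~1 and~2.

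For part~1 the inclusion $H'\subseteq\widetilde H$ is already recorded in \eqref{eq2.1}, so I would only prove $\widetilde H\subseteq H'$. Given $x\in\widetilde H$, fix $k\ge1$ with $a:=x^k\in H$; the goal is to produce two consecutive powers of $x$ lying in $H$, because $x^n,x^{n+1}\in H$ forces $x^m\in H$ for all $m\ge n^2$ (the numerical semigroup generated by $n$ and $n+1$ is cofinite), and hence $x\in H'$. To obtain such a pair I would apply Lemma~\ref{prim}.1 to $x\in\mathsf q(H)$ with base $a$: for all large $m$ one gets $a^mx=x^{km+1}\in H$, while $a^m=x^{km}\in H$ automatically, giving the consecutive powers $x^{km},x^{km+1}$. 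The equivalence of seminormality and root-closedness is then the formal restatement of $H=H'\iff H=\widetilde H$.

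For part~2 I would first establish $\mathfrak m\subseteq(H\colon\widehat H)$, i.e. $ax\in H$ whenever $a\in\mathfrak m$ and $x\in\widehat H$. Choose $c\in H$ with $cx^n\in H$ for every $n$; if $c\in H^\times$ then $x\in H$ and we are done, so assume $c\in\mathfrak m$. The primary condition then yields $m$ with $a^m\in cH$, say $a^m=ch$ with $h\in H$, and therefore $(ax)^m=a^mx^m=h\,(cx^m)\in H$; root-closedness from part~1 (here $H$ is seminormal) upgrades $(ax)^m\in H$ to $ax\in H$. This gives $\widehat H\mathfrak m\subseteq H$, and since $\widehat H\mathfrak m$ is an $s$-ideal of $H$ containing no unit --- a relation $1=xa$ with $x\in\widehat H$ and $a\in\mathfrak m$ would force $a\in\widehat H^\times\cap H=H^\times$ by Lemma~\ref{2.3}, contradicting $a\in\mathfrak m$ --- we obtain $\widehat H\mathfrak m\subseteq\mathfrak m$, that is $\widehat H\subseteq(\mathfrak m\colon\mathfrak m)$. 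For the reverse inclusion, any $x\in(\mathfrak m\colon\mathfrak m)$ satisfies $ax^n\in\mathfrak m\subseteq H$ for all $n$ and any fixed $a\in\mathfrak m$, so $x\in\widehat H$ with witness $c=a$; this direction uses only primarity. Combining the two inclusions yields $\widehat H=(\mathfrak m\colon\mathfrak m)$.

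I expect the main obstacle to be the first inclusion in part~2, $\mathfrak m\subseteq(H\colon\widehat H)$, as it is the one step that genuinely couples the primary structure (to absorb the denominator $c$ into a power of $a$) with the root-closedness supplied by part~1 (to descend from $(ax)^m$ back to $ax$). A secondary point requiring care is in part~1: Lemma~\ref{prim}.1 is in force only when its base is a non-unit, so I would choose $k$ with $x^k\in\mathfrak m$ when possible and treat the case $x\in\widetilde H^\times$ separately, so that the primary hypothesis genuinely applies.
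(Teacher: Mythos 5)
Your write-up tracks the paper's own proof in all three parts: part 3 is the same citation-plus-reduction via part 1; part 2 is the paper's argument up to bookkeeping (the paper absorbs the denominator via $a^kd^{-1}\in H$ and applies root-closedness to $(ax^n)^k$, you apply it to $(ax)^m$; your explicit unit-exclusion via Lemma~\ref{2.3} and the reverse inclusion $(\mathfrak m\colon\mathfrak m)\subseteq\widehat H$ are what the paper does in compressed form); and part 1 is exactly the paper's application of Lemma~\ref{prim}.1 with base $a=x^k$.

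The genuine gap is the case you flag at the end and then defer: $x\in\widetilde H^\times$. You promise to treat it separately but give no argument, and in fact none exists, because the inclusion $\widetilde H\subseteq H'$ fails precisely there. If $x\in\widetilde H^\times$, then $x^k\in H^\times$ for some $k$ (choose $x^a,x^{-b}\in H$ and take $k=ab$); if in addition $x\notin H$, then $x\notin H'$, since $x^n\in H$ for any $n\equiv 1\pmod{k}$ would force $x=x^n\,(x^k)^{-(n-1)/k}\in H$. Such elements do occur in primary monoids, even in multiplicative monoids of domains: for the one-dimensional local (indeed noetherian) domain $R=\mathbb{R}+t\,\mathbb{C}[[t]]$ and $x=i$ one has $i^2=-1\in R^\times$ but $i\notin R$, so $i\in\widetilde{R^\bullet}\setminus (R^\bullet)'$; moreover $R^\bullet$ is seminormal (elements of positive $t$-order already lie in $R$, elements of negative order have no powers in $R$, and for order zero two consecutive large powers in $R$ force a real constant term), yet it is not root-closed. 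So both assertions of part 1 fail for this $R$: your case split is correct, but the second case is a counterexample, not a missing argument, and part 1 is only salvageable under an extra hypothesis such as ``$x^k\in H^\times$ implies $x\in H^\times$'' (e.g. $\mathsf q(H)/H^\times$ torsion-free). Note that the paper's own proof has the same defect --- it applies Lemma~\ref{prim}.1 with the possibly invertible base $x^k$, and Lemma~\ref{prim}.1 is itself false for unit bases (take $a=1$ and $x\notin H$), since its proof uses the primary property, which concerns non-units only. So what you noticed is a hole in the paper's argument, not merely in yours.

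A consequence for the rest of your proposal: part 2 as you (and the paper) wrote it reaches root-closedness through part 1, so it inherits the problem. It can be repaired so as not to depend on part 1 at all: in your notation, for every $n\ge m$ one has $(ax)^n=a^{n-m}h\,(cx^n)\in H$, hence $ax\in H'=H$ by seminormality alone. With that change, parts 2 and 3 stand independently of part 1.
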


\begin{proof}
	1. By  \eqref{2.1}, we have $H' \subseteq \widetilde H$ whence it remains to verify the reverse inclusion. Let $x \in \mathsf q (H)$,  and $k \in \N$ such that $x^k \in H$. By Lemma \ref{prim}.1, there is an $N \in \mathbb N$ such that  $(x^k)^nx \in H$ for all $n \ge N$. Thus $x^{kn} \in H$ and $x^{kn+1} \in H$ for all $n \ge N$, which implies that $x^m \in H$ for all $m$ sufficiently large since $kn$ and $kn+1$ are coprime integers (explicitly, $x^m \in H$ for $m \ge (kn)^2$). Therefore  $x \in H'$.
	
2. Let $x\in \widehat H$, thus $dx^n\in H$ for some $d\in H$ and all $n\in \mathbb N$. Let $a\in H$. Since $H$ is primary, we have $a^kd^{-1}\in H$ for some $k \in\mathbb N$, so $a^kx^n\in H$ for all $n\in \mathbb N$. Thus $a^k(x^n)^k \in H$ for all $n\in \mathbb N$. Since $H$ is root-closed by item 1., we obtain that $ax^n\in H$ for all $n\in\mathbb N$. Thus $a\widehat H\subseteq H$, so  $\mathfrak m\widehat H \subseteq H$. Since $\mathfrak m \subseteq  \mathfrak n$ by Lemma \ref{2.3}, we infer that $\mathfrak m\widehat H \subseteq\mathfrak m$, that is, $\widehat H\subseteq (\mathfrak m\DP \mathfrak m)$, so $\widehat H=(\mathfrak m\DP \mathfrak m)$. Cf. \cite[Proposition 4.8]{G-HK-H-K03}.
\end{proof}

Monoid properties do not always carry over  to integral domains. However,  the domain $R$ is  seminormal (completely integrally closed, Mori, Krull, primary, strongly primary, atomic) if and only if its monoid $R^{\bullet}$ has the respective property. We consider,  for example, the Mori property.  By definition, the domain $R$ is Mori if and only if it satisfies the ascending chain condition on integral divisorial ideals. If $X \subseteq R$ is a subset, then  $I=(R:(R:X))$ is divisorial, and we have $I^{\bullet}=(R^{\bullet}:(R^{\bullet}:X^{\bullet}))$, and $I=(R^{\bullet}:(R^{\bullet}:X^{\bullet}))\cup \{0\}$, where $Y^{\bullet} = Y \setminus \{0\}$ for $Y \subseteq \mathsf q (R)$.  It follows that the domain $R$ is Mori if and only if the monoid $R^{\bullet}$ is Mori.

Note that  a domain $R$ is primary if and only if $R$ is one-dimensional and local (\cite[Proposition 2.10.7]{Ge-HK06a}).
Every  primary Mori monoid is strongly primary with $v$-$\spec (H) = \{\emptyset, H \setminus H^{\times}\}$ whence every one-dimensional local Mori domain is strongly primary (\cite[Proposition 2.10.7]{Ge-HK06a}. A survey on the ideal theory of Mori domains is given by  Barucci in \cite{Ba00}).

All finitely primary monoids (including all numerical monoids) are strongly primary (\cite[Section 2.7]{Ge-HK06a}).
Examples of finitely primary domains which are not Mori can be found in \cite[Sections 3 and 4]{HK-Ha-Ka04}.
Moreover, \cite[Examples 4.6 and 4.7]{HK-Ha-Ka04} are not  multiplicative monoids of domains; Example 4.6 is Mori,
while Example 4.7 is not.    Puiseux monoids (these are additive submonoids of $(\Q_{\ge 0}, +))$, which are strongly primary, are discussed in \cite{Ge-Go-Tr20} and strongly primary monoids stemming from module theory can be found in \cite[Theorems 5.1 and 5.3]{Ba-Ge-Gr-Sm15}.

An example going back to Grams (\cite[Example 1.1]{An-An-Za90} or \cite[Example 1.1.6]{Ge-HK06a}) exhibits an atomic one-dimensional local domain which is not a BF-domain whence it is neither strongly primary nor locally tame (see Definition \ref{3.1} below).

\section{On the arithmetic  of strongly primary monoids and domains} \label{3}

We start with the concept  of local tameness as given in \cite{Ge-HK06a}.

\begin{definition} \label{3.1}
	Let $H$ be an atomic monoid.
	\begin{enumerate}
		\item For an element $a \in H$, let $\omega (a)$ denote the smallest $N \in \mathbb N_0 \cup \{\infty\}$ with the following property: if $n \in \N$ and $a_1, \ldots, a_n \in H$ with $a \mid a_1 \cdot \ldots \cdot a_n$, then there is a subset $\Omega \subset [1,n]$ such that
		\[
		|\Omega| \le N \quad \text{and} \quad a \mid \prod_{\lambda \in \Omega} a_{\lambda} \,.
		\]
		We set $\omega (H) = \sup \{\omega (u) \mid u \in \mathcal A (H)\}$.
		
		\item For an element $u \in \mathcal A (H_{\red})$, let $\mathsf t (u)$ denote the smallest $N \in \mathbb N_0 \cup \{\infty\}$ with the following property: if $a \in H$ with $\mathsf Z (a) \cap u \mathsf Z (H) \ne \emptyset$ and $z \in \mathsf Z (a)$, then there is a $z' \in \mathsf Z (a) \cap u \mathsf Z (H)$ such that $\mathsf d (z,z') \le N$.
		
		\item $H$ is said to be
		\begin{enumerate}
			\item {\it locally tame} if $\mathsf t ( u) < \infty$ for all $u \in \mathcal A (H_{\red})$, and
			\item {\it $($globally$)$ tame} if $\mathsf t (H) = \sup \{ \mathsf t ( u)  \mid u \in \mathcal A (H_{\red}) \} < \infty$.
		\end{enumerate}
	\end{enumerate}
\end{definition}

If $u\in \mathcal A(H)$, we let $\mathsf t(u)=\mathsf t(uH^{\times})$. For a prime element $u \in H$, we have $\omega (u)=1$ and $\mathsf t ( u)=0$, thus $\omega(H)=1$ for a factorial monoid. If $u \in \mathcal A (H)$ is not prime, then $\omega (u) \le \mathsf t (u)$ whence for a non-factorial monoid we have $\omega (H) \le \mathsf t (H)$. Moreover, $H$ is globally tame if and only if $\omega(H)<\infty$ \cite[Proposition 3.5]{Ge-Ka10a}. Every Mori monoid satisfies $\omega (a) < \infty$ for all $a \in  H$ (\cite[Theorem 4.2]{Ge-Ha08a}, \cite[Proposition 3.3]{F-G-K-T17}) but this need not hold true for the local tame degrees $\mathsf t ( \cdot)$. If $H$ is an atomic monoid, since $\sup \mathsf L (a) \le \omega (a)$ for all $a \in H$ (\cite[Lemma 3.3]{Ge-Ha08a}), the finiteness of the $\omega (a)$ values (hence, in particular, local tameness) implies that $H$ is a \BF-monoid.

We continue with a  simple reformulation of local tameness which we use in the following. Clearly, for an atom $u \in \mathcal A (H_{\red})$, the local tame degree $\mathsf t (u)$ is the smallest $N \in \mathbb N_0 \cup \{\infty\}$ with the following property:
\begin{itemize}
	\item[] For every multiple $a \in H_{\red}$ of $u$ and any factorization $z=v_1 \cdot \ldots \cdot v_n$ of $a$ which does not contain $u$, there is a subproduct which is a multiple of $u$, say $v_1 \cdot \ldots \cdot v_m$, and a refactorization of this subproduct which contains $u$, say $v_1 \cdot \ldots \cdot v_m=uu_2 \cdot \ldots \cdot u_{\ell}$ such that $\max \{\ell, m\} \le N$.
\end{itemize}
\medskip

Recall that a monoid is {\it half-factorial} if all the factorizations of an element in $H$ are of the same length, equivalently, if
the set of distances $\Delta(H)$ is empty.

\begin{proposition}\label{lambdarho}
	\label{nonemptyint}
	Let $H$ be  a strongly primary monoid.
	\begin{enumerate}
	\item
For every atom $u\in H$, we have $\omega(u)\le \mathcal M(u)$. Thus $\omega(u)<\infty$.
	\item
		Assume that  $\Lambda(H)<\infty$. Then $\rho_{\Lambda(H)}(H)=\infty$, so $\rho_k(H)=\infty$ for all $k \ge \Lambda(H)$. In particular,  $H$ is not half-factorial.	
	\item
For every atom  $u\in H$ we have  $\mathsf t(u)\le \rho_{\omega(u)}(H)$. Hence, if  $\rho_{\omega(u)}(H)<\infty$ for every atom $u\in H$, then $H$ is locally tame.
	\end{enumerate}
\end{proposition}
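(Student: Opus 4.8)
I would prove the three parts in the order 1, 3, 2, since part 3 rests on the finiteness of $\omega(u)$ established in part 1, whereas part 2 is logically independent. For part 1 I would simply unwind both definitions. Let $u$ be an atom and suppose $u \mid a_1 \cdots a_n$ with $a_1, \dots, a_n \in H$. Units among the $a_i$ may be discarded without affecting divisibility, so I may assume every $a_i$ lies in $\mathfrak m$. If $n \le \mathcal M(u)$, then the whole index set $[1,n]$ already works. Otherwise, any $\mathcal M(u)$ of the factors have product in $\mathfrak m^{\mathcal M(u)} \subseteq uH$, hence divisible by $u$, directly by the definition of $\mathcal M(u)$. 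Either way a subset of size at most $\mathcal M(u)$ suffices, so $\omega(u) \le \mathcal M(u) < \infty$.

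For part 3, which is the heart of the statement, I would fix an atom $u$, a multiple $a$ of $u$, and a factorization $z = v_1 \cdots v_n \in \mathsf Z(a)$. Using $\omega(u) < \infty$ from part 1, I extract (after reordering) a subproduct $b = v_1 \cdots v_m$ with $m \le \omega(u)$ and $u \mid b$. The point is then to refactor $b$ into a product containing $u$ whose length $\ell$ is controlled. Here I would use that $m \in \mathsf L(b)$ together with $m \le \omega(u)$ and the monotonicity of $(\rho_k)_{k \ge 1}$ to get $\sup \mathsf L(b) \le \rho_m(H) \le \rho_{\omega(u)}(H)$; consequently the factorization of $b$ obtained by splitting off $u$ and factoring the cofactor has length $\ell \le \rho_{\omega(u)}(H)$. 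Substituting it for $v_1 \cdots v_m$ in $z$ produces a $z' \in \mathsf Z(a)$ containing $u$, and since $z$ and $z'$ agree on $v_{m+1}\cdots v_n$, the distance satisfies $\mathsf d(z,z') \le \max\{m,\ell\} \le \rho_{\omega(u)}(H)$, where I also use $\omega(u) \le \rho_{\omega(u)}(H)$. This yields $\mathsf t(u) \le \rho_{\omega(u)}(H)$, and the local tameness assertion is then immediate from Definition \ref{3.1}.

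For part 2, I would set $k = \Lambda(H) < \infty$ and fix a non-unit $x$. For each $n \ge 1$ the power $x^n$ is a non-unit, so $1 \le \min \mathsf L(x^n) \le \Lambda(H) = k$; as only finitely many integer values are available, some $v \in [1,k]$ occurs as $\min \mathsf L(x^{n_i})$ along a subsequence $n_i \to \infty$. Because $\max \mathsf L(x^n) \ge n$, these $x^{n_i}$ have $v \in \mathsf L(x^{n_i})$ and unbounded maximal length, so $\rho_v(H) = \infty$, and monotonicity gives $\rho_k(H) \ge \rho_v(H) = \infty$, hence $\rho_k(H) = \infty$ for all $k \ge \Lambda(H)$. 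Finally, since $H$ is a BF-monoid each set of lengths is finite, so $\rho_{\Lambda(H)}(H) = \infty$ forces some element to have two distinct lengths, whence $H$ is not half-factorial.

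The main obstacle is the second part. The natural first attempt, namely to take an element realizing $\min \mathsf L = \Lambda(H)$ and raise it to powers, fails, because minimal lengths need not grow or even stay constant under multiplication, so one cannot keep $\Lambda(H)$ itself in the length set of the powers. The fix is to accept whatever value $v \le \Lambda(H)$ recurs along the powers of a single non-unit and then transfer the conclusion to $k = \Lambda(H)$ via monotonicity of $\rho_k$. By contrast, in part 3 the only delicate choice is to bound the refactorization length indirectly through $\sup \mathsf L(b) \le \rho_{\omega(u)}(H)$ rather than attempting to control it head-on.
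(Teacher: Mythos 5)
Your proposal is correct and follows essentially the same route as the paper in all three parts: part 1 by unwinding the definitions via $\mathfrak m^{\mathcal M(u)}\subseteq uH$, part 3 by bounding the refactorization of the $\omega(u)$-subproduct through $\sup\mathsf L(b)\le\rho_{\omega(u)}(H)$, and part 2 by playing bounded minimal lengths against unbounded maximal lengths together with the monotonicity of $(\rho_k)_{k\ge1}$. The only cosmetic difference is in part 2, where the paper refactors an arbitrary product of $n$ atoms into at most $\Lambda(H)$ atoms (making your pigeonhole step on powers of a fixed non-unit unnecessary, since monotonicity of $\rho_k$ already absorbs a varying short length $k\le\Lambda(H)$), but this changes nothing of substance.
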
	

\begin{proof}
1. Since $u$ divides each product of $\mathcal M(u)$ non-units in $H$, we see that $N=\mathcal M(u)$ satisfies the property in Definition \ref{3.1}.1, and since $\omega(u)$ is the least $N$ satisfying this property, it follows that   $\omega(u)\le \mathcal M(u)$.		
	
2.
For every $n\in\mathbb N$, a product of $n$ atoms  is also a product of $k$ atoms for some positive integer $k \le\Lambda(H)$. Hence $\rho_{\Lambda(H)}(H)\ge \rho_k(H)\ge n$. Thus $\rho_{\Lambda(H)}(H)=\infty$.	

3. Let $a$ be an element of $H$ that is divisible by an atom $u$. A factorization of $a$ has a subproduct that is divisible by $u$ and of length $k\le \omega(u)$. Hence every factorization of this subproduct is of length $\le \rho_k(H)\le \rho_{\omega(u)}(H)$. By the reformulation of Definition \ref{3.1}.1 we obtain that $\mathsf t(u)\le \rho_{ \omega(u)}(H)$.
\end{proof}

\medskip

A monoid $H$ is said to be a
\begin{itemize}
	\item {\it valuation monoid} if for all $a, b \in H$ we have $a \mid b$ or $b \mid a$.
	
	\item {\it discrete valuation monoid}  if $H_{\red} \cong (\N_0, +)$.
\end{itemize}

\begin{lemma} \label{3.3}
	Let $H$ be a  monoid with $\mathfrak m = H \setminus  H^{\times} \ne \emptyset$.
	\begin{enumerate}
		\item $H$ is a primary valuation monoid if and only if $H$ is a completely integrally closed valuation monoid if and only if $H_{\red}$  is isomorphic to a monoid of non-negative elements of a non-zero subgroup of $(\R, +)$.
				
		\item The following conditions are equivalent:
\begin{enumerate}
\item
 $H$ is a discrete valuation monoid.

\item
$H$ is  atomic  and $\mathfrak m$ is principal.

\item
$H$ is a completely integrally closed  strongly primary monoid.
\end{enumerate}

\item\cite[Theorem 16.4 g)]{HK98}
A valuation monoid is discrete if and only if it is atomic if and only if it is strongly primary.
\end{enumerate}
\end{lemma}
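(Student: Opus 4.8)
The plan is to prove all three equivalences by first isolating the value group of a valuation monoid and then treating the three parts in the order 1, 2 (easy directions), 2 (the hard direction), 3.

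For Part 1 I would first extract the value group. If $H$ is a valuation monoid, then every $x\in\mathsf q(H)$ satisfies $x\in H$ or $x^{-1}\in H$, so $G:=\mathsf q(H)/H^{\times}$ becomes a totally ordered abelian group under $xH^{\times}\ge 0\iff x\in H$, and $H_{\red}$ is its non-negative cone; since $\mathfrak m\ne\emptyset$ we have $G\ne 0$. I would then check that, for a valuation monoid, \emph{both} ``primary'' and ``completely integrally closed'' are equivalent to $G$ being Archimedean: the primary condition $\forall a,b\in\mathfrak m\,\exists n\colon a\mid b^{n}$ is a verbatim transcription of the Archimedean axiom on $G_{>0}$; and for complete integral closure, if $G$ is Archimedean one embeds $G\hookrightarrow(\R,+)$ and observes that $cx^{n}\in H$ for all $n$ forces the value of $x$ to be $\ge 0$ (otherwise values tend to $-\infty$), so $\widehat H\subseteq H$, while a non-Archimedean pair produces an element of infinitely small value and hence some $x\in\widehat H\setminus H$. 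Hölder's theorem then identifies Archimedean totally ordered abelian groups with subgroups of $(\R,+)$, giving the third formulation; the converse (C)$\Rightarrow$(A),(B) is immediate since a non-negative cone in $\R$ is a valuation monoid with Archimedean value group.

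For Part 2 I would first dispatch (a)$\Leftrightarrow$(b). If $H_{\red}\cong(\N_{0},+)$ then $H$ is atomic and the lift $p$ of the generator gives $\mathfrak m=pH$. Conversely, if $H$ is atomic with $\mathfrak m=pH$, then every atom lies in $pH$ and is therefore associated to $p$ (else it factors into two non-units), so by atomicity every non-unit is a power of $p$ times a unit, and cancellativity makes these powers pairwise non-associated, whence $H_{\red}\cong(\N_{0},+)$. The implication (a)$\Rightarrow$(c) is routine: in $\N_{0}$ one reads off $\mathcal M(p^{n}u)=n$, and complete integral closure follows from Part~1 applied to $\N_{0}\subseteq\Z\subseteq\R$.

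The only substantial point is (c)$\Rightarrow$(a), and this is where I expect the real work. Let $H$ be completely integrally closed and strongly primary; then $H$ is a primary \BF-monoid. I plan to prove just that $\mathfrak m$ is principal, after which (b)$\Rightarrow$(a) finishes. The engine is a descent: suppose $a\in\mathfrak m$ has $\mathcal M(a)\ge 2$ and pick $w\in\mathfrak m^{\,\mathcal M(a)-1}\setminus aH$, which exists by minimality of $\mathcal M(a)$. Then $x:=w/a\notin H$, yet $x\mathfrak m\subseteq H$ because $w\mathfrak m\subseteq\mathfrak m^{\,\mathcal M(a)}\subseteq aH$. If we had $x\mathfrak m\subseteq\mathfrak m$ then $x\in(\mathfrak m\DP\mathfrak m)\subseteq\widehat H=H$ (the inclusion $(\mathfrak m\DP\mathfrak m)\subseteq\widehat H$ holding in any monoid, as $c\in\mathfrak m$ gives $cx^{n}\in\mathfrak m$ for all $n$), contradicting $x\notin H$; hence $xt_{0}\in H^{\times}$ for some $t_{0}\in\mathfrak m$, which forces $x^{-1}=a/w\in\mathfrak m$. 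Thus $w\mid a$ with $a=wv$ a proper factorization and $w\in\mathfrak m$. Iterating from any fixed $a_{0}$, as long as the current factor has $\mathcal M\ge 2$ I peel off another non-unit, producing factorizations of $a_{0}$ into arbitrarily many non-units and contradicting bounded factorization. Hence the descent halts at some $w$ with $\mathcal M(w)=1$, i.e.\ $\mathfrak m=wH$. The hard part will be exactly this CIC-step: recognizing that $(\mathfrak m\DP\mathfrak m)\subseteq\widehat H=H$ is precisely what turns ``$x$ carries $\mathfrak m$ into $H$ but lies outside $H$'' into the divisibility $w\mid a$, and that it is the \BF\ property, not any finer invariant, that terminates the chain.

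Finally, statement~3 is the cited result \cite[Theorem 16.4 g)]{HK98}; alternatively it follows from Parts~1 and~2. Indeed, a strongly primary valuation monoid is primary, hence completely integrally closed by Part~1, so discrete by (c)$\Rightarrow$(a); an atomic valuation monoid satisfies $\mathfrak m=uH$ for any atom $u$ (comparability forces every non-unit to be a multiple or an associate of $u$), so it is discrete by (b)$\Rightarrow$(a); and the reverse implications are contained in Part~2.
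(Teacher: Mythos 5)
Your proposal is correct, but it is organized quite differently from the paper's proof and is considerably more self-contained. The paper disposes of Part 1, of the equivalence (a)$\,\Leftrightarrow\,$(b) in Part 2, and of Part 3 by citation to \cite[Theorems 15.5 and 16.4]{HK98} and \cite[Section 3]{Ge93b}; you instead prove these from scratch (value group plus H\"older's theorem for Part 1, a direct associate-counting argument for (a)$\,\Leftrightarrow\,$(b), and a derivation of Part 3 from Parts 1 and 2), and your sketches are sound. The only implication the paper actually proves is (c)$\,\Rightarrow\,$(b), and there the two arguments share the same engine, namely that $(\mathfrak m \colon \mathfrak m) \subseteq \widehat H = H$ when $H$ is completely integrally closed; but they deploy it differently. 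The paper anchors the argument at an atom $u$: if $\mathcal M(u) > 1$, then $u^{-1}\mathfrak m^{\mathcal M(u)} \subseteq \mathfrak m$ (because $u$ is an atom), hence $u^{-1}\mathfrak m^{\mathcal M(u)-1} \subseteq (\mathfrak m \colon \mathfrak m) \subseteq H$, i.e.\ $\mathfrak m^{\mathcal M(u)-1} \subseteq uH$, contradicting the minimality of $\mathcal M(u)$; thus $\mathcal M(u)=1$ and $\mathfrak m = uH$ in a single step. You instead run the analogous computation at an arbitrary $a \in \mathfrak m$ with $\mathcal M(a)\ge 2$, extract a proper factorization $a = wv$ with $w, v \in \mathfrak m$, and then iterate, invoking the BF-property to terminate the descent. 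This is valid, but note that your own single step already finishes the proof if you simply take $a$ to be an atom (atoms exist, since strongly primary monoids are BF-monoids and hence atomic): the conclusion $a = wv$ with $w, v \in \mathfrak m$ is then an immediate contradiction, so neither the iteration nor the appeal to bounded factorization is needed. What your route buys is independence from the cited literature; what the paper's buys is brevity, since placing the minimality argument at an atom collapses your descent into a one-line contradiction.
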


\begin{proof}
	For 1. and for the equivalence of the first two items of 2. we refer to  \cite[Theorems 15.5 and 16.4]{HK98} and to \cite[Section 3]{Ge93b}. To complete the proof of 2., we note that $(a)	\Rightarrow (c)$, so it is enough to prove the implication $(c)\Rightarrow (b)$.
	Thus it remains to show that $\mathfrak m$ is principal. Let $u \in \mathcal A (H)$. Then $\mathfrak m^{\mathcal M (u)} \subseteq uH$. If $\mathcal M (u)=1$, then $\mathfrak m = uH$ and we are done. Assume that $\mathcal M (u) >1$. Then $\mathfrak m^{\mathcal M (u)} \subseteq u H$, whence $u^{-1} \mathfrak m^{\mathcal M (u)} \subseteq \mathfrak m$ and $u^{-1}\mathfrak m^{\mathcal M (u)-1} \subseteq (\mathfrak m \colon \mathfrak m)$. Since $ H$ is completely integrally closed, we have $(\mathfrak m \colon \mathfrak m) \subseteq H$ and thus $\mathfrak m^{\mathcal M (u)-1} \subseteq u H$,  contradicting the minimality of $\mathcal M (u)$.
\end{proof}

\begin{lemma}\label{wdhprimary}
Let $(H, \mathfrak m)$ be a primary monoid and $\mathfrak n = \widehat H \setminus \widehat H^{\times}$.
\begin{enumerate}
\item If  every element of $\mathfrak n$ has a power lying in $\mathfrak m$, then $\widehat H$ is primary.

\item If $(H\DP \widehat H)\ne\emptyset$,  then $\widehat H$ is primary if and only if  every element of $\mathfrak n$ has a power lying in $\mathfrak m$.
\end{enumerate} 	
\end{lemma}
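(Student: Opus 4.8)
The plan is to treat the three implications separately, with part~1 doing most of the work and the ``if'' direction of part~2 being an immediate consequence of it. Throughout, the crucial auxiliary fact is Lemma~\ref{2.3}, which gives $H^{\times} = \widehat H^{\times} \cap H$; this is what lets me move non-units back and forth between $H$ and $\widehat H$, and in particular it shows $\mathfrak m \subseteq \mathfrak n$ (a non-unit of $H$ cannot become a unit in $\widehat H$). Since $H$ is primary we have $\mathfrak m \ne \emptyset$, hence $\mathfrak n \ne \emptyset$ and so $\widehat H \ne \widehat H^{\times}$ in every case; the real content of primality of $\widehat H$ is therefore the divisibility condition on pairs of elements of $\mathfrak n$.

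For part~1 I assume each $y \in \mathfrak n$ has a power in $\mathfrak m$ and verify the divisibility condition directly. Given $a, b \in \mathfrak n$, I first pick $s, t \in \N$ with $a^s, b^t \in \mathfrak m$. Now $a^s$ and $b^t$ form a pair of elements of $\mathfrak m$, so I can feed them into the primality of $H$ to obtain $k \in \N$ with $(b^t)^k \in a^s H$. The final step is purely formal: $a^s H \subseteq a^s \widehat H = a\,(a^{s-1}\widehat H) \subseteq a \widehat H$ because $a^{s-1} \in \widehat H$, so $b^{tk} \in a\widehat H$, which is exactly what primality of $\widehat H$ demands. Note that this direction does not use the conductor hypothesis at all.

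For part~2 the ``if'' direction is part~1, so only the ``only if'' direction uses $(H \DP \widehat H) \ne \emptyset$. I fix $c \in (H \DP \widehat H)$, so that $c \in H$ and $c\widehat H \subseteq H$. First I would dispose of the degenerate case $c \in H^{\times}$: there $c^{-1} \in H$ forces $\widehat H = c^{-1}(c\widehat H) \subseteq H$, so $\widehat H = H$, $\mathfrak n = \mathfrak m$, and the conclusion is trivial. Otherwise $c \in \mathfrak m \subseteq \mathfrak n$. Now, given $x \in \mathfrak n$, I apply primality of $\widehat H$ to the pair $(c, x)$ to get $m \in \N$ with $x^m \in c\widehat H \subseteq H$; the inclusion into $H$ is precisely where the conductor is used. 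It then remains to see that $x^m$ is a non-unit of $H$: if $x^m \in H^{\times} \subseteq \widehat H^{\times}$, then writing $x^m u = 1$ with $u \in \widehat H$ gives $x\,(x^{m-1}u) = 1$ with $x^{m-1}u \in \widehat H$, so $x \in \widehat H^{\times}$, contradicting $x \in \mathfrak n$. Hence $x^m \in \mathfrak m$.

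The only genuinely delicate point---really a bookkeeping point rather than an obstacle---is keeping the two notions of ``unit'' straight: one must repeatedly invoke $H^{\times} = \widehat H^{\times} \cap H$ to conclude that an element landing in $H$ and failing to be a unit of $\widehat H$ is in fact a non-unit of $H$, and that a power of a non-unit of $\widehat H$ remains a non-unit. Once Lemma~\ref{2.3} is used systematically, both parts reduce to a single application of primality (of $H$ in part~1, of $\widehat H$ in part~2) together with the elementary inclusions involving $c$ and the conductor.
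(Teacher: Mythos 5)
Your proof is correct and follows essentially the same route as the paper: part 1 reduces primality of $\widehat H$ to primality of $H$ by replacing elements of $\mathfrak n$ with powers lying in $\mathfrak m$, and part 2 uses a conductor element to push a power of $x$ into $\mathfrak m$ inside $H$ and then invokes part 1. The only differences are cosmetic bookkeeping: in part 1 the paper applies the hypothesis to just one of the two elements and handles the other via Lemma~\ref{prim}.1, and in part 2 it avoids your unit-tracking and degenerate case by choosing $c \in \mathfrak m(H \DP \widehat H)$, which gives $c \in \mathfrak m$ and $c\widehat H \subseteq \mathfrak m$ automatically.
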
 	

\begin{proof}
1. Let $x,y\in \mathfrak n$.  There exists an $n\in \mathbb N$ such that $x^n=m\in \mathfrak m$. Since $H$ is primary, by Lemma \ref{prim}.1., there exists a $k \in \mathbb N$ such that $m^k \in yH$. Thus $x^{nk}\in yH$, implying that $\widehat H$ is primary.

2. Assume that $\widehat H$ is primary. Let $c\in \mathfrak m(H\DP \widehat H)$ and $x\in \mathfrak n$. As $\widehat H$ is primary, and $\mathfrak  m\subseteq \mathfrak n$ by  Lemma \ref{2.3}, we obtain that  for some integer $n\in \mathbb N$ we have $x^n\in c\widehat H \subseteq \mathfrak m$.
We  complete the proof using item 1. 	
\end{proof}

\begin{lemma} \label{3.4}
Let $(H, \mathfrak m)$ be a strongly primary monoid and $\mathfrak n = \widehat H \setminus \widehat H^{\times}$.
	\begin{enumerate}
		\item If $x \in \mathsf q (H)$, then $\Lambda (H \setminus x H) < \mathcal M (x)<\infty$.
		
		\item We have $\Lambda (H) < \infty$ if and only if there is a $c \in \mathfrak m$ with $\Lambda ( \{c^m \mid m \in \N_0 \}) < \infty$.
		
		\item If $\Lambda (H) =\infty$, then every element of $\mathfrak n$ has a power lying in $\mathfrak m$, so $\widehat H$ is primary.
	\end{enumerate}
\end{lemma}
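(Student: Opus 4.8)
The plan is to establish the three parts in order, deriving the third (the substantial one) from the first two together with strong primariness.

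For the first part I would argue straight from the definition of $\mathcal M(x)$. By Lemma~\ref{prim}.2 the number $\mathcal M(x)$ is finite and $\mathfrak m^{\mathcal M(x)} \subseteq xH$. If some $a \in H \setminus xH$ had $\min \mathsf L(a) \ge \mathcal M(x)$, then every factorization of $a$ would display $a$ as a product of at least $\mathcal M(x)$ non-units, forcing $a \in \mathfrak m^{\mathcal M(x)} \subseteq xH$, a contradiction. Hence $\min \mathsf L(a) \le \mathcal M(x)-1$ for every $a \in H \setminus xH$, which is precisely $\Lambda(H \setminus xH) < \mathcal M(x) < \infty$.

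For the second part the forward implication is trivial, since $\{c^m \mid m \in \N_0\} \subseteq H$ for any $c\in\mathfrak m$. For the converse I assume $L := \Lambda(\{c^m \mid m \in \N_0\}) < \infty$. As $H$ is strongly primary it is a BF-monoid, so $\bigcap_{m \ge 0}\mathfrak m^m = \emptyset$; since $c^m H \subseteq \mathfrak m^m$, each $a \in H$ lies in $c^kH \setminus c^{k+1}H$ for a unique $k \ge 0$, and I write $a = c^k a'$ with $a' \in H \setminus cH$. Using \eqref{Lambda-inequality} and then the first part with $x=c$ gives $\min \mathsf L(a) \le \min \mathsf L(c^k) + \min \mathsf L(a') \le L + (\mathcal M(c)-1)$, whence $\Lambda(H) \le L + \mathcal M(c) - 1 < \infty$.

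For the third part I would prove the contrapositive: if some $x \in \mathfrak n$ has no power in $\mathfrak m$, then $\Lambda(H) < \infty$; the closing clause is then Lemma~\ref{wdhprimary}.1. First, no power of $x$ can lie in $H$ at all, for a power in $H^{\times}$ would put $x^{-1}$ in $\widehat H$ and, since $\widehat H$ is root-closed (Lemma~\ref{2.2}.1), would force $x \in \widehat H^{\times}$, against $x \in \mathfrak n$. Thus the witness $d \in H$ with $dx^n \in H$ for all $n$ must lie in $\mathfrak m$ (a unit $d$ would give $x^n = d^{-1}(dx^n)\in H$). The key point is that $dx^k \notin dH$ for $k \ge 1$ (otherwise $x^k \in H$); since $\mathfrak m^{\mathcal M(d)} \subseteq dH$, this yields $dx^k \notin \mathfrak m^{\mathcal M(d)}$ and hence the uniform bound $\max \mathsf L(dx^k) \le \mathcal M(d)-1$. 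Now for any $a \in H$ the exponent $t := \max\{s \ge 0 \mid a \in x^sH\}$ is finite (otherwise $a(x^{-1})^s \in H$ for all $s$, so $x^{-1}\in \widehat H$ and again $x \in \widehat H^{\times}$); writing $a = x^t h$ with $h \in H \setminus xH$ and using $da = (dx^t)h$, I get $\min \mathsf L(da) \le \max \mathsf L(dx^t) + \min \mathsf L(h)$, which is bounded by $(\mathcal M(d)-1)+(\mathcal M(x)-1)$ when $t \ge 1$ and by $\min\mathsf L(d)+(\mathcal M(x)-1)$ when $t=0$, thanks to the uniform bound and the first part applied to $h$. This bounds $\Lambda(dH)$, and as $\Lambda(H\setminus dH) < \mathcal M(d)$ by the first part, I conclude $\Lambda(H) < \infty$.

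I expect the main obstacle to be exactly this last step: minimal factorization length is \emph{not} monotone under multiplication, so one cannot bound $\min\mathsf L(a)$ directly in terms of $\min\mathsf L(da)$. The device that circumvents this is to split $H$ into the two pieces $dH$ and $H \setminus dH$ and to bound $\Lambda$ on each separately; the bound on $dH$ rests on the somewhat surprising uniform finiteness of the maximal lengths $\max \mathsf L(dx^k)$, and it is precisely here that the assumption that $x$ has no power in $H$ is decisive.
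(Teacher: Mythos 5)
Your proof is correct and follows essentially the same route as the paper's: part 1 from $a \notin \mathfrak m^{\mathcal M(x)} \subseteq xH$, part 2 via the decomposition $a = c^k a'$ with $a' \notin cH$ together with \eqref{Lambda-inequality}, and part 3 by the same contrapositive argument --- a witness $d \in \mathfrak m$ with $dx^n \in H$ for all $n$, the observation that no power of $x$ lies in $H$ (so $dx^k \in H \setminus dH$), the maximal-exponent decomposition with cofactor in $H \setminus xH$, and part 1 applied to both factors. Your splitting of $H$ into $dH$ and $H \setminus dH$ is only a cosmetic repackaging of the paper's case distinction $c \in dH$ versus $c \notin dH$, since $da \in dx^kH$ if and only if $a \in x^kH$.
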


\begin{proof}
1. If $a \in H \setminus xH$, then $a \notin \mathfrak m^{\mathcal M (x)} \subseteq xH$. Thus $\Lambda (H \setminus x H) < \mathcal M (x)$.
	
2. Let $c \in \mathfrak m$ such that $\Lambda ( \{c^m \mid m \in \N_0 \}) < \infty$. Let $a \in H$. By  \cite[Lemma 2.7.7.1]{Ge-HK06a}, we have   $a\mid c^k$ for some $k \in \mathbb N$, so there is an $n\le k$ in $\N_0$ such that $a=c^nb$, where $b \in H$ is not divisible by $c$. Now 1. implies that
	\[
	\Lambda (a) \le \Lambda (c^n) + \Lambda (b) \le \Lambda ( \{c^m \mid m \in \N_0 \}) + \mathcal M (c) \,.
	\]
	Thus $\Lambda (H) \le \Lambda ( \{c^m \mid m \in \N_0 \}) + \mathcal M (c)$, and the reverse implication is trivial.
	
3. Suppose there is an $x \in \mathfrak n$ such that no power of $x$ belongs to $\mathfrak m$, and we will prove that $\Lambda(H)<\infty$. Let $d \in \mathfrak m$ such that $dx^n \in H$ for all $n \in \N$. We choose an element $c\in \mathfrak m$ and assert that $\Lambda(c)<\mathcal M(d)+\mathcal M(x)$, implying that
	$\Lambda(H)<\mathcal M(d)+\mathcal M(x)$. If $c \notin dH$, then $\Lambda (c) < \mathcal M (d)$ by 1.
	Suppose that $c\in dH$. Then there exists an integer $k\in\mathbb N$ such that $c\in (dx^kH\setminus dx^{k+1}H)$, because otherwise we would have $c(1/x)^n \in H$ for all $n \in \N$ implying that $1/x\in \widehat H$. Thus  $c=(dx^k)b$, where $b\in H\setminus xH$ whence $\Lambda(b)<\mathcal M(x)$ by 1. If $dx^k \in dH$,  then  $x^k\in H\setminus \mathfrak m$, so $x$ is invertible in $H$, a contradiction.  This implies that  $dx^k\in H\setminus  dH$ whence $\Lambda(dx^k)<\mathcal M(d)$ by 1.  Thus $\Lambda (c) = \Lambda(dx^kb)\le\Lambda(dx^k)+\Lambda(b)<\mathcal M(d)+\mathcal M(x)$. By Lemma \ref{wdhprimary}, $\widehat H$ is primary.
\end{proof}

  The converse of Lemma \ref{3.4}.3 is false even for domains (see Example \ref{Nagata} below and Lemma \ref{wdhprimary}).
Let $(H,\mathfrak m)$ be a strongly primary monoid. If $\widehat H$ is not primary (equivalently, if $\mathfrak m$ is not the unique prime s-ideal of $\widehat H$), then $\Lambda(H)<\infty$. In particular, if $R$ is a strongly primary domain such that $\widehat R$ is not local, then $\Lambda(R)<\infty$.

\begin{lemma} \label{3.5}
Let $(H, \mathfrak m)$ be a strongly primary monoid such that $\Lambda (H)=\infty$. Let $\widetilde{\mathfrak m} = \widetilde H \setminus {\widetilde H}^{\times}$ and $\mathfrak n = \widehat H \setminus {\widehat H}^{\times}$. Then
	\begin{enumerate}
		\item  $\widetilde{\mathfrak m} = \mathfrak n$.
		
		\item For every $x \in \mathfrak n$, we have $x^n \in \mathfrak m$ for all sufficiently large $n \in \N$.
		
		\item If $(H \colon \widehat H) \ne \emptyset$, then $\widehat H$ is a primary  valuation monoid.
	\end{enumerate}
\end{lemma}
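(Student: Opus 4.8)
The plan is to prove the parts in the order (1), (2), (3), obtaining (2) essentially for free from the argument for (1), and reducing (3) to a structural fact about completely integrally closed primary monoids. Throughout I use that $H$ is primary, so $\widetilde H = H'$ by Lemma~\ref{2.4}.1 — hence every $x\in\widetilde H$ satisfies $x^n\in H$ for all large $n$ — and that $\widetilde H$ and $\widehat H$ are root-closed by Lemma~\ref{2.2}.1.

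For (1) I would prove the two inclusions separately. To see $\mathfrak n\subseteq\widetilde{\mathfrak m}$, let $x\in\mathfrak n$. Since $\Lambda(H)=\infty$, Lemma~\ref{3.4}.3 gives some $k$ with $x^k\in\mathfrak m\subseteq H$, so $x\in\widetilde H$; moreover $x\notin\widetilde H^{\times}$, for otherwise $x^{-1}\in\widetilde H\subseteq\widehat H$ together with $x\in\widehat H$ would force $x\in\widehat H^{\times}$, contradicting $x\in\mathfrak n$. To see $\widetilde{\mathfrak m}\subseteq\mathfrak n$, let $x\in\widetilde{\mathfrak m}$; then $x^n\in H$ for all large $n$, and in fact $x^n\in\mathfrak m$ for all large $n$, since $x^n\in H^{\times}$ for such an $n$ would give $(x^{-1})^n\in\widetilde H$, whence root-closedness yields $x^{-1}\in\widetilde H$ and thus $x\in\widetilde H^{\times}$, a contradiction. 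Finally, if $x\in\widehat H^{\times}$ then $x^n\in\widehat H^{\times}\cap H=H^{\times}$ by Lemma~\ref{2.3}, contradicting $x^n\in\mathfrak m$; hence $x\in\widehat H\setminus\widehat H^{\times}=\mathfrak n$. The intermediate step just used is precisely (2): for $x\in\widetilde{\mathfrak m}=\mathfrak n$ one has $x^n\in\mathfrak m$ for all sufficiently large $n$, so (2) needs no separate argument.

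For (3) I would first record that $\widehat H$ is primary by Lemma~\ref{3.4}.3 and completely integrally closed by Lemma~\ref{2.2}.2 (using $(H\DP\widehat H)\neq\emptyset$), and then reduce the claim to the assertion that a completely integrally closed primary monoid $G$ is a valuation monoid; granting this, $\widehat H$ is a valuation monoid and hence, being primary, a primary valuation monoid by Lemma~\ref{3.3}.1. To prove the assertion I would fix a non-unit $c\in G$, set $v_c(a)=\max\{m\ge 0: ac^{-m}\in G\}$ (finite for $a\in G$ because $G$ is completely integrally closed), and form the $c$-adic value $w(a)=\lim_{n\to\infty} v_c(a^n)/n$. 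Here Lemma~\ref{prim}.1, applied in the primary monoid $G$, guarantees that $c^n x\in G$ and $c^n x^{-1}\in G$ for all large $n$, so the relevant data are defined for every $x\in\mathsf q(G)$; primarity further gives $c\mid a^{j}$ for each non-unit $a$, which forces $w(a)>0$ exactly on non-units while $w$ vanishes on units. Extending $w$ to $\mathsf q(G)\to(\R,+)$ by $w(a/b)=w(a)-w(b)$ should exhibit $G$ as $\{x\in\mathsf q(G): w(x)\ge 0\}$, that is, as a valuation monoid.

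The steps in (1) and (2) are routine, relying only on root-closedness together with Lemmas~\ref{2.3}, \ref{2.4}, and~\ref{3.4}. The main obstacle is the valuation step in (3): upgrading ``completely integrally closed $+$ primary'' to ``valuation monoid''. The delicate points are that the superadditive data $v_c$ produce a genuinely \emph{additive} limiting value $w$, and that $G$ coincides with $\{w\ge 0\}$ rather than being a proper submonoid; both depend essentially on complete integral closedness. A naive comparison of two elements $a,b\in\mathfrak n$ via $a\mid b^{k}$, $b\mid a^{k}$ does not close, so I would isolate the valuation statement as a separate lemma built from the Fekete limit above, or invoke the valuation-theoretic background underlying Lemma~\ref{3.3}.
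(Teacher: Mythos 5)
Parts (1) and (2) of your proposal are correct, and they run close to the paper's own argument with only minor rearrangement: the paper also gets $\mathfrak n\subseteq\widetilde{\mathfrak m}$ from Lemma~\ref{3.4}.3 and $\widetilde{\mathfrak m}\subseteq\mathfrak n$ from Lemma~\ref{2.3}, and it proves (2) by spreading one relation $x^k\in\mathfrak m$ to all large exponents using primariness, where you instead invoke $\widetilde H=H'$ (Lemma~\ref{2.4}.1) and a unit argument; both routes are sound.

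Part (3) is where your proposal breaks down, and the failure is not a repairable technicality: the lemma you reduce to --- that every completely integrally closed primary monoid is a valuation monoid --- is \emph{false}. This is precisely Krull's conjecture, disproved by Nagata, and the paper itself is built around this failure: by Proposition~\ref{existspm} and Example~\ref{Nagata} there is a strongly primary monoid $(H,\mathfrak m)$ with $\mathfrak m\subseteq(H\DP\widehat H)$, hence nonempty conductor, such that $\widehat H$ is primary and completely integrally closed but not a valuation monoid (necessarily with $\Lambda(H)<\infty$). So your Fekete-limit construction cannot deliver both an additive value $w$ and the equality $G=\{x\in\mathsf q(G)\mid w(x)\ge 0\}$; at least one of the two ``delicate points'' you flagged genuinely fails, and Lemma~\ref{3.3}.1, which you hoped to fall back on, only characterizes primary \emph{valuation} monoids --- it does not upgrade ``completely integrally closed and primary'' to ``valuation''. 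The structural reason the plan cannot be fixed is that it uses the hypothesis $\Lambda(H)=\infty$ only to get primariness of $\widehat H$, whereas that hypothesis must enter the valuation argument itself. The paper's proof of (3) does exactly this, by contradiction: if $x,x^{-1}\notin\widehat H$, choose $c\in(H\DP\widehat H)$ with $cx,cx^{-1}\in H$, and let $n_k$, $n'_k$ be minimal with $c^{n_k}x^{k}\in H$, $c^{n'_k}x^{-k}\in H$. Comparing these with the minimal exponents over $\widehat H$ (increasing by Lemmas~\ref{2.2}.1 and~\ref{2.1}.1, tending to infinity by Lemma~\ref{2.2}, items 2 and 3) shows that $n_k+n'_k\to\infty$ with consecutive gaps at most $4$, while $c^{n_k}x^k$ and $c^{n'_k}x^{-k}$ are not divisible by $c$ in $H$. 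Hence every power $c^n=(c^{n_k}x^k)(c^{n'_k}x^{-k})c^f$ with $f\in[0,4]$ has $\Lambda(c^n)$ bounded independently of $n$ by Lemma~\ref{3.4}.1 and \eqref{Lambda-inequality}, so Lemma~\ref{3.4}.2 forces $\Lambda(H)<\infty$, a contradiction. Any correct proof of (3) must contain an argument of this kind, in which $\Lambda(H)=\infty$ does real work.
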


\begin{proof}	
	1. By \ref{eq2.1}, we have $\widetilde H \subseteq  \widehat H$, so by  Lemma \ref{2.3}, we infer that $\widetilde{\mathfrak m} \subseteq \mathfrak n$.
	Since $\Lambda (H)=\infty$, Lemma \ref{3.4}.3 implies that every element of $\mathfrak n$ has a power lying in $\mathfrak m$, whence $\mathfrak n \subseteq \widetilde H \setminus {\widetilde H}^{\times} = \widetilde{\mathfrak m}$.

	2. Let $x \in \mathfrak n$.  Since $\Lambda (H)=\infty$, Lemma \ref{3.4}.3 implies that there is a $k \in \N$ such that $x^k \in \mathfrak m$. Since $H$ is primary, there is a $q_0 \in \N$ such that $x^{q_0k+r}= (x^k)^{q_0}x^r \in \mathfrak m$ for all $r \in [0, k-1]$. If $n \in \N$ with $n \ge q_0k$, then $n=qk+r$, where $q \ge q_0$ and $r \in [0,k-1]$, and $x^n = x^{k(q-q_0)} x^{q_0k+r} \in \mathfrak m$.
	
	3. By 	Lemma \ref{wdhprimary}, $\widehat H$ is primary.
	
	Assume to the contrary, that  $\widehat H$ is not a valuation  monoid. Thus there exists an element $x\in \mathsf q (H)$ such that $x, x^{-1}\notin \widehat H$. If $s\in (H\DP \widehat H)$, then $s^{n}x, s^{n}x^{{-1}}\in H$ for  all sufficiently large $n \in \N$. Hence there exists $c\in (H\DP \widehat H)$ such that $cx, cx^{-1}\in H$.  For each $k \in \N$, let $n_k\in \N$  be the smallest integer such that $c^{n_k}x^k\in H$, and let $\widehat n_k\in \N$  be the smallest integer such that $c^{\widehat n_k}x^k\in \widehat H$.   Thus, by definition,  $1\le\widehat n_k \le n_k\le k$, $\widehat n_{k+1} \le \widehat n_k + 1$. Also $c^{n_k}x^k$ is not divisible by $c$ in $H$: for $n_k=1$ this holds since $x^k\notin H$, and for $n_k>1$ this follows from the minimality of $n_k$. Since $c\widehat H\subseteq H$, we obtain that $n_{k}\le \widehat  n_k+1$, whence $\widehat n_k \le n_k\le \widehat n_k+1$. As $\widehat   H$ is root-closed by  Lemma \ref{2.2}.1, we obtain by Lemma \ref{2.1}.1 (applied to $\widehat H$) that the sequence $\widehat n_k$ is increasing. Thus we infer that $|n_{k+1}-n_k|\le |\widehat n_{k+2}-\widehat n_k|\le 2$.
	Since $(H\DP \widehat H)\ne \emptyset$, Lemma \ref{2.2} (items 2 and 3) implies that  $\widehat H$ is completely integrally closed and that, for every $m \in \N$, $c^m x^k\in \widehat H$ for just finitely many  $k$'s. This implies that    $\lim_{k\to\infty}\widehat n_k=\infty$ whence $\lim_{k\to\infty}n_k=\infty$.

	Proceeding in the same way with the element $x^{-1}$ as with the element $x$, we obtain   a sequence $n'_k$ having all the properties of the sequence $(n_k)_{k \ge 1}$ with respect to the element $x^{-1}$. Then for all $k\in \N$, the element
	\[
	c^{n_k+n'_k}=(c^{n_k} x^k)(c^{n'_k} x^{-k})
	\]
	is a product of two elements not divisible by $c$ in $H$.
	
	Let $n \in \N$ and  let $k \in \N$ be  maximal  such that $n_k+n'_k\le n$. Then $n_k+n'_k \le n \le n_{k+1}+n'_{k+1}\le n_k+n'_k +4 $. This implies that
	\[
	c^n = c^{n_k+n'_k}c^f =  (c^{n_k} x^k)(c^{n'_k} x^{-k})c^f  \quad \text{for some } \quad f \in [0,4] ,
	\]
	whence $\Lambda (c^n) \le \Lambda (c)+\Lambda (c) + \Lambda (c^f)$. Thus Lemma \ref{3.4}.2 implies that  $\Lambda(H)<\infty$, a contradiction.
\end{proof}

\begin{theorem} \label{zerocond}
Let $(R,M)$ be a strongly primary domain.  If 	$(R \DP \widehat R)=(0)$, then $\Lambda(R) < \infty$.
\end{theorem}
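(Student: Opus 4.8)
The plan is to argue by contradiction: assume that $R$ is strongly primary with $(R\DP\widehat R)=(0)$ yet $\Lambda(R)=\infty$, and produce a contradiction. Write $\mathfrak m=M\setminus\{0\}$ for the set of nonzero non-units of $R$ and let $\mathfrak n=\widehat R\setminus\widehat R^{\times}$, $N=\mathfrak n\cup\{0\}$ be the corresponding data for $\widehat R$. Since $\Lambda(R)=\infty$, Lemmas \ref{3.4} and \ref{3.5} apply, and I would first harvest their structural output: by Lemma \ref{3.4}.3 the monoid $\widehat R$ is primary and every element of $\mathfrak n$ has a power in $\mathfrak m$; by Lemma \ref{3.5}.1 we have $\widetilde{\mathfrak m}=\mathfrak n$; and by Lemma \ref{3.5}.2 every $x\in\mathfrak n$ satisfies $x^{n}\in\mathfrak m$ for all sufficiently large $n$. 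Crucially, Lemma \ref{3.5}.3 is \emph{not} available, because its hypothesis $(R\DP\widehat R)\neq\emptyset$ is the exact negation of ours; this is the signal that the argument must leave the purely multiplicative world and use the additive structure of the domain.

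One natural route is to show that the hypothesis $\Lambda(R)=\infty$ forces the conductor to be nonzero, i.e. to produce a single nonzero $c\in R$ with $c\widehat R\subseteq R$, directly contradicting $(R\DP\widehat R)=(0)$. Here the ring structure is indispensable: in a domain $M=\mathfrak m\cup\{0\}$ and $N$ are additively closed ideals of $R$ and $\widehat R$, whereas for a bare monoid they are not, and this is precisely why the analogous monoid statement fails. Note also that, since $\widehat R$ is root-closed by Lemma \ref{2.2}.1 and primary, it is seminormal by Lemma \ref{2.4}.1, so Lemma \ref{2.4}.2 gives $\mathfrak n\subseteq(\widehat R\DP\widehat{\widehat R})$. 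Thus the conductor \emph{one level up} is large, while the conductor of $R$ in $\widehat R$ is required to vanish, and the tension between these two facts is what I would exploit.

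By definition of the complete integral closure, each $y\in\widehat R$ carries its own multiplier $d_{y}\in R$ with $d_{y}y^{n}\in R$ for all $n$; the whole content is to upgrade these pointwise multipliers to one uniform multiplier valid on all of $\widehat R$ simultaneously. To do this I would fix $x\in\widehat R\setminus R$ (such $x$ exist, since $\widehat R=R$ would give $(R\DP\widehat R)=(R\DP R)=R\neq(0)$) and, for a fixed $c\in\mathfrak m$ and each $k$, study the least exponent $n_{k}$ with $c^{\,n_{k}}x^{k}\in R$, mirroring the bookkeeping in the proof of Lemma \ref{3.5}. Where that proof used an honest conductor element to ensure $n_{k}<\infty$ and to realise a power of $c$ as a product of two factors lying in $R$, here I would instead form additive combinations of the elements $c^{\,n}x^{k}$, legitimate because $M$ and $N$ are closed under addition, to manufacture the required factors inside $R$. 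Equivalently, via Lemma \ref{3.4}.2 it suffices to exhibit one $c\in\mathfrak m$ with $\min\mathsf L(c^{m})$ bounded independently of $m$, and the additive construction is what supplies the short factorizations of the powers $c^{m}$.

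The hard part will be exactly this uniformity step: amalgamating the per-element multipliers $d_{y}$ into a single conductor element. There is no purely multiplicative mechanism to do so, which is why strongly primary monoids with vanishing conductor can have $\Lambda=\infty$; the additive closure of $M$ and $N$ is the only extra tool, so the crux is to show that it is enough. Once a nonzero $c$ with $c\widehat R\subseteq R$ has been produced (or, on the equivalent route, a uniform bound on $\min\mathsf L(c^{m})$ has been established), the contradiction with $(R\DP\widehat R)=(0)$ is immediate and the theorem follows.
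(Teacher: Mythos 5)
There is a genuine gap. Your write-up sets the stage correctly (argue by contradiction, harvest Lemma \ref{3.4}.3 and Lemma \ref{3.5}.1--2, aim for a uniform bound on $\min\mathsf L(c^m)$ so that Lemma \ref{3.4}.2 applies), but it stops exactly at the crux and says so: ``the hard part will be exactly this uniformity step \dots the crux is to show that it is enough.'' No mechanism is offered for that step, and the mechanism is the entire content of the theorem. Note also that your first proposed route --- produce a nonzero element of $(R\DP\widehat R)$ --- is not a reduction but merely the contrapositive restatement of the statement being proved; the paper's proof never produces a conductor element, it bounds $\Lambda(R)$ directly.

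Worse, the one concrete piece of bookkeeping you do propose would fail. You fix a single $x\in\widehat R\setminus R$ and study the least exponents $n_k$ with $c^{n_k}x^k\in R$, mirroring Lemma \ref{3.5}.3. But for a fixed $x\in\widehat R$ there is one nonzero $d\in R$ with $dx^k\in R$ for \emph{all} $k$, and since $R$ is primary, $c^{n}\in dR$ for all large $n$ (Lemma \ref{prim}.1); hence $c^{n}x^k\in R$ for all $k$ once $n$ is large, and a fixed $x$ can never supply elements outside $R$ at every level $n$. The hypothesis $(R\DP\widehat R)=(0)$ must instead be used \emph{positively} and afresh at each level: since $c^{n-1}\mathfrak n\widehat R\not\subseteq R$, for every $n$ one may choose $x=x_n\in\mathfrak n$ with $c^{n-1}x\notin R$. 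The additive ingredient is then not a vague ``additive combination'' but a specific unit trick: adjusting $y=c^kx^i$ (with $i$, $k$ chosen maximal) so that $c^{n-1}y\notin R$ while $c^ny^j\in M$ for all $j\ge 1$ and $y^e\in M$ for some $e$, one observes that $1-y^e=(1-y)(1+y+\cdots+y^{e-1})$ is a unit of the local ring $R$, whence both $c^n(1-y)$ and $c^n/(1-y)=c^n(1+y+\cdots+y^{e-1})/(1-y^e)$ lie in $R$ and neither is divisible by $c$. Thus $c^{2n}=\bigl(c^n(1-y)\bigr)\bigl(c^n/(1-y)\bigr)$ is, for every $n$, a product of two elements not divisible by $c$, so $\Lambda(c^{2n})<2\mathcal M(c)$ uniformly in $n$ by Lemma \ref{3.4}.1, and Lemma \ref{3.4}.2 applied to $c^2$ gives $\Lambda(R)<\infty$, the desired contradiction. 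Without this construction (or a substitute for it), your proposal is a plan, not a proof.
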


\begin{proof}
Assume to the contrary that $\Lambda(R)=\infty$. Let $\mathfrak  n=\widehat R\setminus (\widehat R)^{\times}$.	Choose a nonzero element $c\in M$. Let $n$ be a positive integer. Since $(R \DP \widehat R)=(0)$, we infer that $(c^{n-1}\mathfrak n)\widehat R \not\subseteq R$, whence 	there exists an element  $x\in \mathfrak n$ such that $c^{n-1} x\notin R$.

Since $\Lambda(R)=\infty$,  Lemma \ref{3.5}.2 implies that $x^i\in M$ (equivalently, $x^i\in R$) for all sufficiently large integers $i\in \mathbb N$.	Let $i \in \N$ be maximal  such that $c^{n-1}x^i \notin R$. Since  $R$ is primary,  there exists a maximal nonnegative integer $k$ such that $c^{n-1}c^kx^i\notin M$. Set $y=c^kx^i$, so $c^{n-1}y\notin R$. We have $c^ny^j\in M$ for all $j\in \mathbb N$ and $c^{n-1}y^j\in M$ for all $j>1$ since $c^{n-1}x^j\in \mathfrak m$ for all $j>i$.
We have $y^e\in M$ for some positive integer $e$.
Hence
$$
(1-y)(1+y+\cdots +y^{e-1})=1-y^e\in R^{\times}.
$$
Thus
$$
c^n(1-y)\in R, \quad \text{ and} \quad \frac {c^{n}} {1-y}=c^n\frac {1+y+\cdots +y^{e-1}} {1-y^{e}}\in R.
$$
We see that   $c^n(1-y)$ and $\frac {c^{n}} {1-y}$ are not divisible by $c$  in $R$.
Thus $c^{2n}$ is a product of two elements that are not divisible by $c$ in $R$:
$$
c^{2n}=\Big(c^{n}(1-y)\Big) \, \Big(\frac {c^{n}}{1-y}\Big).
$$
Hence $\Lambda (c^{2n}) < \mathcal M (c)+\mathcal M (c)$.
By  Lemma  \ref{3.4}.2 applied to $c^2$ replacing $c$, we conclude that $\Lambda(R)<\infty$, a contradiction.
\end{proof} 	

Theorem \ref{zerocond} is false for monoids by Example \ref{3.13}.1 below.

\begin{theorem} \label{main}
	\begin{enumerate}
		\
		
		\item [(a)]
		Let $(H, \mathfrak m)$ be a strongly primary monoid, and let 	$\mathfrak f=(H\DP \widehat H)$. Then the first two conditions below are equivalent,  and each of the first four conditions implies its successor.	Moreover, if $\mathfrak f	\ne \emptyset$, then all nine conditions are equivalent.		
\begin{enumerate}
			\item[(1)] $H$ is globally tame.
			
			\item[(2)] $\bigcap_{u\in \mathcal A(H)} uH\ne \emptyset$.

			\item[(3)] $\rho (H) < \infty$.
			
			\item[(4)] $\rho_k(H)<\infty$ for all $k \in \N$.
			
			\item[(5)] $\Lambda (H)=\infty$.
			
			\item[(6)] $\widehat H$ is a primary  valuation monoid.
			
			\item[(7)] 	$\widehat H$ is a valuation monoid.				
			
			\item[(8)] 	$\mathfrak f\mathfrak m\subseteq \bigcap_{u\in \mathcal A (H)}u\widehat H$.
			
			\item[(9)]  $\mathfrak f\mathfrak m^2\subseteq \bigcap_{u\in \mathcal A (H)} uH$.		
	\end{enumerate}

		\item[(b)] Let $(R,M)$ be a strongly primary domain, and $\mathfrak f=(R\DP \widehat R)$. All the following nine conditions are equivalent:
		\begin{enumerate}
		 \item[(1)] $R$ is globally tame.
		
		 \item[(2)] $\bigcap_{u\in \mathcal A(H)} uH\ne \emptyset$.
			
		 \item[(3)] $\rho (R) < \infty$.
		
		 \item[(4)] $\rho_k(R)<\infty$ for all $k \in \N$.
		
		 \item[(5)] $\Lambda (R)=\infty$.
		
		 \item[(6)] $\widehat R$ is a primary  valuation domain and $\mathfrak f\ne(0)$.
		
		 \item[(7)] 	$\widehat R$ is a valuation domain and $\mathfrak f\ne(0)$.				
		
		 \item[(8)] 	$(0)\ne\mathfrak f\mathfrak m\subseteq \bigcap_{u\in \mathcal A (R)}u\widehat R$.
		
		 \item[(9)]  $(0)\ne\mathfrak f\mathfrak m^2\subseteq \bigcap_{u\in \mathcal A (R)} uR$.	
		\end{enumerate}
	\end{enumerate}
\end{theorem}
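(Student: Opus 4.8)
The plan is to prove (a) in full and then derive (b) from (a) together with Theorem \ref{zerocond}. For (a) I would first dispatch the unconditional part, namely $(1)\Leftrightarrow(2)$ and $(2)\Rightarrow(3)\Rightarrow(4)\Rightarrow(5)$, and then, under the hypothesis $\mathfrak f\ne\emptyset$, close the loop through $(5)\Rightarrow(6)\Rightarrow(7)\Rightarrow(8)\Rightarrow(9)\Rightarrow(2)$, which together with the earlier implications makes all nine equivalent.

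The semigroup core is $(1)\Leftrightarrow(2)$, and the key observation is that in a strongly primary monoid $a\in uH$ forces $\mathcal M(u)\le\mathcal M(a)$: indeed $a\in uH$ gives $aH\subseteq uH$, and since $\mathfrak m^{\mathcal M(a)}\subseteq aH$ we get $\mathfrak m^{\mathcal M(a)}\subseteq uH$. Hence, if $a\in\bigcap_{u\in\mathcal A(H)}uH$, then $\sup_{u}\mathcal M(u)\le\mathcal M(a)<\infty$, so by Proposition \ref{lambdarho}.1 we have $\omega(H)=\sup_u\omega(u)\le\mathcal M(a)<\infty$; as $\omega(H)<\infty$ is equivalent to global tameness, this yields $(2)\Rightarrow(1)$. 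For $(1)\Rightarrow(2)$ put $N=\omega(H)<\infty$, fix $c\in\mathfrak m$, and note that every atom $u$ divides a power $c^m$ (because $c^{\mathcal M(u)}\in\mathfrak m^{\mathcal M(u)}\subseteq uH$); the defining property of $\omega(u)\le N$ then produces a subproduct $c^{j}$ with $j\le N$ and $u\mid c^{j}\mid c^{N}$, so $c^{N}\in\bigcap_u uH$. The same bound $\omega(H)\le\mathcal M(a)$ combined with subadditivity of $\omega$ gives $\rho_k(H)\le k\,\omega(H)$, whence $\rho(H)=\lim_k\rho_k(H)/k\le\omega(H)<\infty$ by \eqref{elasticities}; this is $(2)\Rightarrow(3)$. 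Then $(3)\Rightarrow(4)$ is immediate from $\rho_k(H)\le k\,\rho(H)$, and $(4)\Rightarrow(5)$ is the contrapositive of Proposition \ref{lambdarho}.2.

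Assume now $\mathfrak f\ne\emptyset$. Then $(5)\Rightarrow(6)$ is precisely Lemma \ref{3.5}.3, and $(6)\Rightarrow(7)$ is trivial. For $(7)\Rightarrow(8)$ fix an atom $u$, $c\in\mathfrak f$ and $m\in\mathfrak m$; if $u\nmid cm$ in $\widehat H$, then since $\widehat H$ is a valuation monoid $cm\mid u$ in $\widehat H$, say $u=(ct)m$ with $t\in\widehat H\setminus\widehat H^{\times}$, and $c\in\mathfrak f$ gives $ct\in c\widehat H\subseteq H$; as $u$ is an atom and $m$ a non-unit, $ct$ must be a unit of $H$, hence of $\widehat H$, forcing $t\in\widehat H^{\times}$, a contradiction. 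Thus $cm\in u\widehat H$, which is (8). For $(8)\Rightarrow(9)$ the point is that the passage from $\widehat H$ back into $H$ consumes one conductor factor: writing a generator of the left-hand side as $c'z$ with $c'\in\mathfrak f$ and $z\in\mathfrak f\mathfrak m$, condition (8) gives $z=uw$ with $w\in\widehat H$, and then $c'z=u(c'w)\in uH$ since $c'w\in\mathfrak f\widehat H\subseteq H$. Finally $(9)\Rightarrow(2)$ is immediate, because $\mathfrak f\ne\emptyset$ and $\mathfrak m\ne\emptyset$ make the left-hand side nonempty, whence $\bigcap_u uH\ne\emptyset$.

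For (b), if $\mathfrak f=(R\colon\widehat R)\ne(0)$ the monoid $R^{\bullet}$ satisfies the hypotheses of (a) with nonempty conductor, and the nine ring-theoretic conditions translate into the nine monoid conditions, so they are equivalent. If instead $\mathfrak f=(0)$, then Theorem \ref{zerocond} yields $\Lambda(R)<\infty$, so (5) fails; by Proposition \ref{lambdarho}.2 we have $\rho_{\Lambda(R)}(R)=\infty$, so (4) fails and with it (3), (2) and (1); and (6)--(9) each fail outright since they demand $\mathfrak f\ne(0)$. Thus in this case all nine conditions are simultaneously false, hence vacuously equivalent, which completes (b). I expect the delicate points to be the valuation-divisibility step $(7)\Rightarrow(8)$ and the conductor-descent step $(8)\Rightarrow(9)$, where the interaction between $H$ and $\widehat H$ must be tracked carefully; the genuinely substantive external inputs are the valuation structure of $\widehat H$ supplied by Lemma \ref{3.5} and the zero-conductor dichotomy of Theorem \ref{zerocond}.
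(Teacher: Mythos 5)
Your proposal tracks the paper's own proof almost step for step: the same split into $(1)\Leftrightarrow(2)$, the chain down to $(5)$, the conditional loop $(5)\Rightarrow(6)\Rightarrow(7)\Rightarrow(8)\Rightarrow(9)\Rightarrow(2)$, and part (b) deduced from part (a) plus Theorem \ref{zerocond}. The deviations are minor and harmless: for $(2)\Rightarrow(3)$ you argue directly via subadditivity of $\omega$ (a standard fact, though you should at least sketch or cite it) together with $\sup\mathsf L(a)\le\omega(a)$, where the paper instead cites \cite[Theorem 1.6.6]{Ge-HK06a} for $(1)\Rightarrow(3)$; and for $(7)\Rightarrow(8)$ you give an element-wise divisibility argument where the paper notes that $\mathfrak f\mathfrak m$ is an $s$-ideal of the valuation monoid $\widehat H$ not containing the atom $u$ --- same substance, both correct.

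There is, however, a genuine flaw at $(8)\Rightarrow(9)$. You claim that a generator of $\mathfrak f\mathfrak m^2$ can be written as $c'z$ with $c'\in\mathfrak f$ and $z\in\mathfrak f\mathfrak m$. That requires \emph{two} factors from $\mathfrak f$, while an element of $\mathfrak f\mathfrak m^2$ carries only one; equivalently, you are assuming $\mathfrak m^2\subseteq\mathfrak f\mathfrak m$, which fails in general. Concretely, consider the additive monoid $H=\{0\}\cup\{1\}\cup\{x\in\R\mid x\ge 2\}$. It is strongly primary, $\widehat H=\{x\in\R\mid x\ge 0\}$ is a primary valuation monoid, $\mathfrak f=\{x\mid x\ge 2\}$, $\mathfrak m=\{1\}\cup\{x\mid x\ge 2\}$, and $\mathcal A(H)=\{1\}\cup\{x\mid 2<x<3\}$; one checks that conditions (1)--(8) all hold, that $\bigcap_{u\in\mathcal A(H)}(u+H)=\{x\mid x\ge 5\}$, but that $\mathfrak f\mathfrak m^2=\{x\mid x\ge 4\}$: for instance $4.5=2.5+1+1$ lies in $\mathfrak f\mathfrak m^2$ yet is not divisible by the atom $2.7$. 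So the implication you assert cannot be repaired --- condition (9) \emph{as printed} is genuinely not equivalent to the others. What your argument, correctly bookkept, does prove is $\mathfrak f^2\mathfrak m\subseteq\bigcap_{u\in\mathcal A(H)}uH$, and that is exactly what the paper's own proof establishes (it writes $\mathfrak f^2\mathfrak m\subseteq u\mathfrak f\widehat H\subseteq uH$); in the example above $\mathfrak f^2\mathfrak m=\{x\mid x\ge 5\}$, matching the intersection. In other words, the printed condition (9), with $\mathfrak f\mathfrak m^2$ in place of $\mathfrak f^2\mathfrak m$, appears to be a typo in the statement (the corrected version still gives $(9)\Rightarrow(2)$ trivially, so the loop closes). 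Your instinct that returning from $\widehat H$ to $H$ ``consumes one conductor factor'' is precisely the paper's idea; the slip is in not noticing that the stated left-hand side does not supply the second conductor factor that this consumption requires.
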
	

\begin{proof}
	\begin{enumerate}
		\item[(a)]

		\
		
		$(1)\Rightarrow (2)$
		
		By assumption $\omega(H)<\infty$.  Let $c \in \mathfrak m$. Then for every $a \in \mathcal A (H)$ there is an $n_a \in \N$ such that $a \mid c^{n_a}$ whence $a \mid c^{\omega (H)}$. This implies that $c^{\omega (H)}\in \bigcap_{u\in \mathcal A(H)} uH$.

		$(2)\Rightarrow (1)$ 	Since $H$ is strongly primary, we have $\mathfrak m^k\subseteq \bigcap_{u \in \mathcal A (H)} uH$ for some positive integer $k$. Since for every atom $u$, we have $\omega(u)\le k$, it follows that $\omega(H)<\infty$, so that $H$ is globally tame.
		
		Thus the first two conditions are equivalent.

		$(1)\Rightarrow (3)$ See \cite[Theorem 1.6.6]{Ge-HK06a}.

		$(3)\Rightarrow (4)$ See Equation \eqref{elasticities}.
		
	   $(4)\Rightarrow (5)$ This follows from Proposition \ref{lambdarho}.2.
				
		\medskip
		
			Now assume that $\mathfrak f\ne(0)$.	
		
		$(5)\Rightarrow (6)$ See Lemma \ref{3.5}.3.
	
	$(6)\Rightarrow (7)$ Obvious.
	
	$(7)\Rightarrow (8)$ 	Let $u$ be an atom in $H$.  Since $u\notin \mathfrak f\mathfrak m$ and $\mathfrak f\mathfrak m$ is an $s$-ideal of the valuation monoid $\widehat H$, it follows that $\mathfrak f\mathfrak m\subseteq u\widehat H$.  Thus the assertion follows.
	
	$(8)\Rightarrow (9)$
		
	Let $u$ be an atom in $H$.   Since $ \mathfrak f\mathfrak m\subseteq u\widehat H$, we infer that  $\mathfrak f^2 \mathfrak m\subseteq u \mathfrak f \widehat H \subseteq uH$. The assertion follows.
	
	$(9)\Rightarrow (2)$ Obvious.
			
		\item[(b)]
		
		By Theorem \ref{zerocond}, condition (5) implies that $\mathfrak f\ne(0)$. Thus each of the first five conditions implies  that $\mathfrak f\ne(0)$. Obviously, each of the other four conditions  implies that $\mathfrak f\ne(0)$.
		By item a., all the nine conditions are equivalent.
		\end{enumerate} 	
\end{proof}

\begin{theorem}\label{tame}
	
	\
	\begin{enumerate}
		\item
		Let $H$ be a strongly primary monoid.
		\begin{enumerate}

		\item		
If $\Lambda(H)<\infty$, then $H$ is locally tame, but not globally tame.

\item
If	$\Lambda(H)=\infty$ and $( H\DP \widehat H)\ne \emptyset$, then $H$ is globally tame.

\item
$H$ is locally tame if  either  $\Lambda(H)<\infty$, or $(H\DP \widehat H)\ne \emptyset$.
\end{enumerate}

		\item
		Let $R$ be  a strongly primary domain.
		\begin{enumerate}
			\item
			$R$ is locally tame.
			
			\item
			$R$ is globally tame if and only if $\Lambda(R)=\infty$.
		\end{enumerate}
	\end{enumerate} 	
	\end{theorem}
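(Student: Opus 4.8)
The plan is to isolate the one genuinely new argument — local tameness of a strongly primary monoid with $\Lambda(H)<\infty$ (the first half of 1(a)) — and to obtain all the other assertions by assembling Theorem \ref{main}, Theorem \ref{zerocond} and Proposition \ref{lambdarho}. The main obstacle is precisely this local tameness: one cannot invoke the bound $\mathsf t(u)\le\rho_{\omega(u)}(H)$ of Proposition \ref{lambdarho}.3, because $\Lambda(H)<\infty$ forces $\rho_k(H)=\infty$ for all $k\ge\Lambda(H)$ by Proposition \ref{lambdarho}.2, so that bound is infinite for atoms with $\omega(u)\ge\Lambda(H)$. Instead I would estimate $\mathsf t(u)$ directly, using $\omega(u)$ to control the length of a suitable subproduct and using $\Lambda(H)$ to control the length of its refactorization.

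Concretely, for an atom $u$ I would work from the reformulation of $\mathsf t(u)$ recorded just before Proposition \ref{lambdarho}. Given a multiple $a$ of $u$ and a factorization $z=v_1\cdot\ldots\cdot v_n$ of $a$ not containing $u$, the finiteness of $\omega(u)$ — which holds since $\omega(u)\le\mathcal M(u)<\infty$ by Proposition \ref{lambdarho}.1 — provides, after relabeling, a subproduct $v_1\cdot\ldots\cdot v_m$ with $m\le\omega(u)$ that is divisible by $u$. Writing $v_1\cdot\ldots\cdot v_m=ub$ with $b\in H$, the definition of $\Lambda(H)$ yields a factorization $b=u_2\cdot\ldots\cdot u_\ell$ with $\ell-1=\min\mathsf L(b)\le\Lambda(H)$ (and $\ell=1$ when $b$ is a unit). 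Hence $v_1\cdot\ldots\cdot v_m=uu_2\cdot\ldots\cdot u_\ell$ with $\max\{\ell,m\}\le\max\{\omega(u),\Lambda(H)+1\}$, so $\mathsf t(u)\le\max\{\omega(u),\Lambda(H)+1\}<\infty$ and $H$ is locally tame. Since this bound depends on $u$ only through $\omega(u)$, it gives no uniform control; and indeed $H$ is not globally tame, which is immediate from the implication $(1)\Rightarrow(5)$ of Theorem \ref{main}(a): global tameness would force $\Lambda(H)=\infty$, contrary to hypothesis.

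The remaining items are bookkeeping. Part 1(b) is the implication $(5)\Rightarrow(1)$ of Theorem \ref{main}(a), which is available because $\mathfrak f\ne\emptyset$ renders all nine conditions equivalent. For 1(c), the case $\Lambda(H)<\infty$ is the first half of 1(a), while the case $\mathfrak f\ne\emptyset$ reduces, according as $\Lambda(H)$ is finite or infinite, either to 1(a) or to 1(b) (global tameness implies local tameness). For the domain statements I would apply part 1 to the monoid $R^{\bullet}$. Since $(R\DP\widehat R)=(0)$ is equivalent to the monoid conductor $\mathfrak f$ being empty, Theorem \ref{zerocond} shows that $\mathfrak f=\emptyset$ already forces $\Lambda(R)<\infty$; thus every strongly primary domain satisfies the hypothesis ``$\Lambda(R)<\infty$ or $\mathfrak f\ne\emptyset$'' of 1(c), giving local tameness and hence 2(a). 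Finally, 2(b) is the equivalence $(1)\Leftrightarrow(5)$ of Theorem \ref{main}(b); equivalently, one direction is $(1)\Rightarrow(5)$ as above, and the other combines Theorem \ref{zerocond} (to deduce $\mathfrak f\ne\emptyset$ from $\Lambda(R)=\infty$) with 1(b).
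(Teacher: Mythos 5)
Your proposal is correct and takes essentially the same route as the paper: the core step is the direct estimate $\mathsf t(u)\le\max\{\omega(u),\Lambda(H)+1\}$, obtained by combining an $\omega(u)$-bounded subproduct divisible by $u$ with a $\Lambda(H)$-bounded refactorization, which is precisely the paper's proof of 1(a), including the appeal to the implication $(1)\Rightarrow(5)$ of Theorem \ref{main}(a) for non-global tameness. The remaining parts are assembled from Theorem \ref{main} and Theorem \ref{zerocond} exactly as in the paper, in particular the reduction of the domain statements to the monoid statements via Theorem \ref{zerocond}.
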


\begin{proof}

\	
\begin{enumerate}
\item

\
\begin{enumerate}
\item
   Let $u \in \mathcal A (H_{\red})$,  $a$ be a multiple of $u$,  and let $a=v_1\dots v_n$ be a product of atoms.  There exists a subproduct of $v_1\dots v_n$  of length $\le \omega(u)$ that is divisible by $u$. This subproduct has a refactorization of the form $ub$ where $b$ is a product of at most $\Lambda(H)$ atoms. Hence $\mathsf t(u)\le \max \{\omega(u),\Lambda(H)+1 \} <\infty$, so $H$ is locally tame.
 \smallskip

The monoid $H$ is not globally tame by the implication $(1)\Rightarrow (5)$ of Theorem \ref{main}.

\item
	If	 $(\widehat H\DP H)\ne\emptyset$, then all nine conditions  of Theorem \ref{main} (a) are equivalent. In  particular, if $\Lambda(H)=\infty$    (condition (5)), then $H$ is globally tame (condition (1)).
	
\item
This follows immediately from the previous two items.
\end{enumerate} 	
 \item
By Theorem \ref{zerocond}, if $(R\DP\widehat R)=(0)$, then $\Lambda(R)<\infty$. Hence  item 2. (for domains) follows from item 1.(for monoids).
\end{enumerate} 	
\end{proof}

In the next corollary we answer in the positive Problem 38 in \cite{C-F-F-G14}.

\begin{corollary}
	A one-dimensional local Mori domain $R$ is locally tame. Moreover, $R$ is globally tame if and only if $\Lambda(R)=\infty$.
\end{corollary}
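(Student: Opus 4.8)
The plan is to deduce this corollary directly from Theorem \ref{tame}.2, by checking that a one-dimensional local Mori domain lies within its hypotheses, namely that such a domain is strongly primary. Once this reduction is in place, the two assertions are exactly parts (a) and (b) of Theorem \ref{tame}.2.

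To carry out the reduction I would argue as follows. By \cite[Proposition 2.10.7]{Ge-HK06a}, a domain is primary precisely when it is one-dimensional and local; hence our $R$ is primary, equivalently its multiplicative monoid $R^{\bullet}$ is a primary monoid. Since $R$ is Mori if and only if $R^{\bullet}$ is Mori (via the correspondence of divisorial ideals recalled in Section \ref{2}), the monoid $R^{\bullet}$ is a primary Mori monoid. Every primary Mori monoid is strongly primary, again by \cite[Proposition 2.10.7]{Ge-HK06a}, so $R^{\bullet}$, and therefore $R$, is strongly primary.

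Having placed $R$ within the scope of Theorem \ref{tame}.2, I would finish by quoting it: part (a) yields that $R$ is locally tame, and part (b) yields that $R$ is globally tame if and only if $\Lambda(R)=\infty$. I do not expect any genuine obstacle here, since all the arithmetic has already been settled in the preceding theorems; the only step requiring attention is the faithful transfer of the \emph{primary} and \emph{Mori} properties between the monoid $R^{\bullet}$ and the domain $R$, which is precisely the content of the discussion preceding Theorem \ref{zerocond}.
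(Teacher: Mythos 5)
Your proposal is correct and follows exactly the paper's own route: reduce to the fact that a one-dimensional local Mori domain is strongly primary (via primary $\Leftrightarrow$ one-dimensional local, the Mori transfer between $R$ and $R^{\bullet}$, and the fact that primary Mori monoids are strongly primary, all as recalled in Section \ref{2}), and then invoke Theorem \ref{tame}.2 for both assertions. The paper's proof is just a terser statement of the same two steps, so there is nothing to add or correct.
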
 	

\begin{proof}
A one-dimensional local Mori domain is strongly primary. Thus the corollary follows from Theorem \ref{tame} 2.	
\end{proof} 	

In the next proposition we deal with two significant special cases of strongly primary monoids.

\begin{proposition} \label{3.7}
	Let $(H, \mathfrak m)$ be a  strongly primary monoid.
	\begin{enumerate}
		\item Let $H$ be seminormal. Then $(H \colon \widehat H) \supseteq \mathfrak  m$, so $H$ is locally tame,  and all conditions of Theorem  \ref{main} (a) are equivalent. If $\widehat H$ is Krull, then $H$ is Mori.
		
		\item Let $H$ be Mori with $(H \colon \widehat H) \ne \emptyset$. Then $\widehat H$ is Krull,  $\widehat H_{\red}$ is finitely generated, and all conditions of Theorem \ref{main} (a) are equivalent to $\widehat H$ being a discrete valuation monoid.
	\end{enumerate}
\end{proposition}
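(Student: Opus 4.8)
The plan is to treat the two parts separately, deriving the ``soft'' assertions from the results already established and isolating a single Mori/Krull transfer as the genuine core. For part (1): since $H$ is seminormal and primary, Lemma \ref{2.4}.1 shows that $H$ is root-closed, and Lemma \ref{2.4}.2 gives $\mathfrak m\subseteq(H\DP\widehat H)$ together with $\widehat H=(\mathfrak m\DP\mathfrak m)$. In particular $\mathfrak f=(H\DP\widehat H)\supseteq\mathfrak m\ne\emptyset$, so Theorem \ref{tame}.1(c) yields that $H$ is locally tame, and the equivalence of all nine conditions of Theorem \ref{main}(a) is immediate because $\mathfrak f\ne\emptyset$. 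There remains the implication ``$\widehat H$ Krull $\Rightarrow$ $H$ Mori''. If $H=\widehat H$ then $H$ is Krull, hence Mori; so I assume $H\ne\widehat H$, whence $\mathfrak f=\mathfrak m$ (the conductor is a proper $s$-ideal containing $\mathfrak m$) and $\widehat H=(\mathfrak m\DP\mathfrak m)$ is Krull.

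The first structural step, which I use in both parts, is that $\widehat H_{\red}$ is finitely generated. I pick a nonunit $0\ne f\in\mathfrak f$. From $f\widehat H\subseteq H\subseteq\widehat H$ I get $H_f=\widehat H_f$, and since $H$ is primary every nonunit of $H$ becomes a unit after inverting $f$, so $H_f=\mathsf q(H)$ and hence $\widehat H_f=\mathsf q(\widehat H)$. Consequently every nonzero prime $s$-ideal of $\widehat H$ contains $f$; as $\widehat H$ is Krull, $f$ lies in only finitely many height-one primes, so $\widehat H$ has only finitely many height-one primes $P_1,\dots,P_r$. The map $\widehat H_{\red}\to\N_0^r$, $x\mapsto(v_{P_1}(x),\dots,v_{P_r}(x))$, is then an injective monoid homomorphism whose image is a submonoid of $\N_0^r$, hence finitely generated by Dickson's lemma; thus $\widehat H_{\red}$ is finitely generated. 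In the seminormal situation of part (1) root-closedness moreover makes $\mathfrak m=\mathfrak f$ a radical $s$-ideal of $\widehat H$ (if $x\in\widehat H$ and $x^n\in\mathfrak m\subseteq H$ then $x\in H$, and $x\notin H^{\times}$), so $\mathfrak m=P_1\cap\dots\cap P_r$.

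To finish part (1) I would transfer the ascending chain condition on divisorial ideals from $\widehat H$ to $H$. Given an ascending chain $(I_k)$ of proper integral divisorial ideals of $H$, strong primariness bounds it from below by some $\mathfrak m^N$ (choose $a\in I_1\subseteq\mathfrak m$ and use $\mathfrak m^{\mathcal M(a)}\subseteq aH\subseteq I_1$), while the $\widehat H$-divisorial closures $V_k=(\widehat H\DP(\widehat H\DP I_k))$ ascend and stabilize to some $V$ by the Krull property. A short computation with the conductor confines the chain to a fixed window: since $(H\DP I_k)\subseteq(\widehat H\DP I_k)$ and $V(\widehat H\DP I_k)\subseteq\widehat H$, one gets $\mathfrak m V\,(H\DP I_k)\subseteq\mathfrak m\widehat H=\mathfrak m\subseteq H$, so $\mathfrak m V\subseteq(H\DP(H\DP I_k))=I_k\subseteq V$ for large $k$. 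The remaining task, which I expect to be the main obstacle, is to show that the interval $[\mathfrak m V,V]$ contains no infinite strictly ascending chain of divisorial ideals of $H$. This is exactly where seminormality is indispensable: because $\mathfrak m=P_1\cap\dots\cap P_r$ is radical and $\widehat H_{\red}$ is finitely generated, the $H$-divisorial ideals between $\mathfrak m V$ and $V$ are controlled by finite data in $\widehat H/\mathfrak m$, and I would establish their finiteness via this conductor-square description, in the spirit of the seminormal gluing in \cite{G-HK-H-K03}.

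For part (2), $H$ is Mori with $\mathfrak f\ne\emptyset$, so $\widehat H$ is completely integrally closed by Lemma \ref{2.2}.2, and it is Krull by the monoid version of the Mori--Nagata theorem for Mori monoids with nonempty conductor (\cite{Ge-HK06a}); finite generation of $\widehat H_{\red}$ then follows verbatim from the second paragraph above. For the last assertion, all nine conditions of Theorem \ref{main}(a) are equivalent since $\mathfrak f\ne\emptyset$, and condition (7) states that $\widehat H$ is a valuation monoid. As $\widehat H$ is Krull it is a \BF-monoid, in particular atomic, and an atomic valuation monoid is a discrete valuation monoid by Lemma \ref{3.3}.3; conversely a discrete valuation monoid is a valuation monoid. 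Hence all nine conditions are equivalent to $\widehat H$ being a discrete valuation monoid. The only delicate input in part (2) is the Mori--Nagata result furnishing the Krull property of $\widehat H$; everything else is a direct consequence of the finite-generation argument and Lemma \ref{3.3}.
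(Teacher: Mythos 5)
Your handling of the ``soft'' assertions is correct and coincides with the paper's: Lemma \ref{2.4}.2 gives $\mathfrak m\subseteq(H\DP\widehat H)$, Theorem \ref{tame} then gives local tameness, nonemptiness of the conductor makes all nine conditions of Theorem \ref{main}(a) equivalent, and in part (2) your reduction of condition (7) to ``discrete valuation monoid'' via atomicity of Krull monoids and Lemma \ref{3.3}.3 is exactly the paper's closing argument. The problems lie in the two remaining claims. Most seriously, the implication ``$\widehat H$ Krull $\Rightarrow H$ Mori'' in part (1) is never actually proved. Your confinement step is fine: the $\widehat H$-divisorial closures $V_k$ stabilize to $V$, and $\mathfrak m V\subseteq I_k\subseteq V$ for large $k$ follows from $\mathfrak m\widehat H\subseteq\mathfrak m$. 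But the decisive assertion --- that the interval $[\mathfrak m V,V]$ admits no infinite strictly ascending chain of divisorial $H$-ideals --- is precisely where you stop, saying you ``would establish'' it via a conductor-square description. That finiteness statement \emph{is} the content of the claim (it is where seminormality does its work), and no argument for it is given; note also that ``$\widehat H/\mathfrak m$'' has no obvious meaning for monoids, so even the shape of the intended argument is unclear. The paper does not attempt this at all: it simply invokes \cite[Lemma 2.6]{Re13a}. As written, part (1) of your proof is incomplete at its core.

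Second, your finite-generation step rests on a false lemma: a submonoid of $\N_0^r$ need \emph{not} be finitely generated, and Dickson's lemma does not say it is --- it only says the set of minimal elements is finite. The paper's own Example \ref{3.13}.2 (the monoid $H_\alpha\subset\N_0^2$, which is not even Mori) is a counterexample, as is $\{(a,b)\in\N_0^2\mid a\ge 1\}\cup\{(0,0)\}$, whose atoms $(1,n)$, $n\in\N_0$, are infinite in number. What your argument actually needs is that the image of $\widehat H_{\red}$ under $x\mapsto(v_{P_1}(x),\ldots,v_{P_r}(x))$ is a \emph{saturated} submonoid of $\N_0^r$: since $\widehat H$ is Krull, $v(x)\le v(y)$ componentwise implies $x\mid y$ in $\widehat H$, so the image equals $\mathsf q(\text{image})\cap\N_0^r$, and such monoids are finitely generated by Gordan's lemma (not Dickson's). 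With that repair, your count of height-one primes (every nonempty prime $s$-ideal of $\widehat H$ contains a fixed nonunit $f$ of the conductor, and $f$ lies in only finitely many height-one primes) does yield finite generation of $\widehat H_{\red}$, giving a legitimate alternative to the paper's citation of \cite[Theorem 2.7.9]{Ge-HK06a} in part (2); you should also dispose of the trivial case $H=\widehat H$, where Lemma \ref{3.3}.2 makes $H$ a discrete valuation monoid. But as submitted, both parts lean on the incorrect Dickson justification, and part (1) additionally lacks its essential step.
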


\begin{proof}
	1. We have  $(H \colon \widehat H) \supseteq \mathfrak m$ by Lemma \ref{2.4}.2. Thus $H$ is locally tame (Theorem \ref{tame}), and the  conditions of Theorem \ref{main} (a) are equivalent. If $\widehat H$ is Krull, then $H$ is Mori by \cite[Lemma 2.6]{Re13a}.
	
	2. Suppose that $H$ is Mori and $(H \colon \widehat H) \ne \emptyset$. Then $\widehat H$ is Krull and $\widehat H_{\red}$ is finitely generated by \cite[Theorem 2.7.9]{Ge-HK06a}. Since  $(H \colon \widehat H) \ne \emptyset$, all conditions of Theorem \ref{main} (a) are equivalent to $\widehat H$ being a valuation monoid (condition (7)). Since Krull monoids are  atomic,  Lemma \ref{3.3}.3 implies that Krull valuation monoids are discrete.
\end{proof}

\begin{proposition}\label{nodom}
Let $H$ be a strongly primary monoid that satisfies one of the following two properties:
\begin{enumerate}
\item
$H$ is not locally tame.

\item
$\Lambda(H)=\infty$ and $(H\DP \widehat H)=\emptyset$.
\end{enumerate} 	
Then $H$ is not the multiplicative monoid of a domain.
\end{proposition}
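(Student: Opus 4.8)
The plan is to argue by contradiction: assume that $H$ is (isomorphic to) the multiplicative monoid $R^{\bullet}$ of a domain $R$, and then invoke the results already proved for strongly primary \emph{domains} to contradict each of the two hypotheses on $H$. The crucial bridge is that strong primarity passes between a domain and its multiplicative monoid, so from $H = R^{\bullet}$ strongly primary I immediately get that $R$ is a strongly primary domain; this is what licenses the use of Theorems \ref{zerocond} and \ref{tame} (and, if desired, \ref{main} (b)) for $R$. Since every arithmetical invariant of $R$ is by definition computed in $R^{\bullet} = H$, all such invariants of $R$ and of $H$ coincide.

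First I would dispose of case 1. If $H$ is not locally tame, then $R = R^{\bullet} = H$ is not locally tame either, because local tameness of a domain is exactly local tameness of its multiplicative monoid (Definition \ref{3.1} applied to $R^{\bullet}$). But Theorem \ref{tame}.2(a) asserts that \emph{every} strongly primary domain is locally tame. This is a direct contradiction, so in case 1 no domain can have $H$ as its multiplicative monoid.

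Next I would treat case 2, where $\Lambda(H) = \infty$ and $(H \DP \widehat H) = \emptyset$. The step that needs genuine care is matching the conductors under passage to the multiplicative monoid. One checks from the definitions that $\widehat{R^{\bullet}} = (\widehat R)^{\bullet}$, and that for a nonzero $z \in \mathsf q(R)$ the inclusion $z\widehat R \subseteq R$ is equivalent to $z(\widehat R)^{\bullet} \subseteq R^{\bullet}$, since a product of nonzero elements of a domain is nonzero. Consequently the nonzero part of the ideal $(R \DP \widehat R)$ is precisely $(R^{\bullet} \DP \widehat{R^{\bullet}}) = (H \DP \widehat H)$. Hence $(H \DP \widehat H) = \emptyset$ forces $(R \DP \widehat R) = (0)$. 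Now Theorem \ref{zerocond} gives $\Lambda(R) < \infty$, while $\Lambda(R) = \Lambda(R^{\bullet}) = \Lambda(H) = \infty$, a contradiction.

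Since both cases are impossible, $H$ cannot be the multiplicative monoid of a domain. The only non-formal ingredient, and the one I would write out explicitly, is the identification of $(H \DP \widehat H)$ with the nonzero part of $(R \DP \widehat R)$ (together with $\widehat{R^{\bullet}} = (\widehat R)^{\bullet}$); everything else reduces to the transfer of strong primarity and to the two cited theorems, so the proposition is in effect a clean corollary of Theorems \ref{zerocond} and \ref{tame}.
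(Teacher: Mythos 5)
Your proposal is correct and takes essentially the same route as the paper: the paper's proof is exactly the contradiction argument via the domain results, citing Theorem \ref{tame} (part 2) for case 1 and Theorem \ref{main} (b) for case 2, the latter of which rests on Theorem \ref{zerocond}, which you invoke directly. Your explicit verification that $(H \DP \widehat H) = \emptyset$ forces $(R \DP \widehat R) = (0)$ (via $\widehat{R^{\bullet}} = (\widehat R)^{\bullet}$) is a detail the paper leaves implicit, but it is the same argument.
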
 	

\begin{proof}
For (1) see Theorem \ref{tame} (b). 	For (2) see Theorem \ref{main} (b).
\end{proof}

For a strongly primary Mori monoid, that  satisfies both conditions of Proposition \ref{nodom}, see Example \ref{3.13}.1 below or \cite[Proposition 3.12]{Ge-Go-Tr20}.

\begin{remark} \label{3.10} Let $R$ be a strongly primary domain. Then the Theorems \ref{zerocond} and  \ref{main} show that either ($\Lambda (R)<\infty$ and $\rho (R)=\infty$) or ($\rho (R)< \infty$ and $\Lambda (R)=\infty$). This was proved for one-dimensional local noetherian domains in \cite[Corollary 3.7]{Ha02a} and it was assumed as an additional abstract property for strongly primary monoids in the study of weakly Krull domains in \cite[Corollary 4.11]{Ha04c}.
\end{remark}
	
We present some examples related to the complete integral closure of a strongly primary domain. Let $R$ be a strongly primary domain such that $(R\DP \widehat R)\ne(0)$.  By  Theorem \ref{main} (b),  if $\widehat R$ is a valuation domain, then $\widehat R$ is primary. The converse  is false as shown in Example \ref{Nagata} below. Moreover,  $\widehat R$ is strongly primary if and only if  $\widehat R$ is a discrete valuation domain by Lemma \ref{3.3}.3. In Example \ref{valnotdis},   $\widehat R$ is a valuation domain, but it is not  discrete.
On the positive side, $\widehat R$ is a discrete valuation domain if $R$ is Mori by Proposition \ref{3.7}.2.
Clearly, the domain  $\widehat R$ need not be primary.  In particular, if $R$ is a  one-dimensional local noetherian domain, then $\widehat R$, which is equal to the integral closure of $R$, is not necessarily a local ring.

For Example \ref{Nagata} below, we need Proposition \ref{existspm}.

\begin{proposition}\label{existspm}~

	\begin{enumerate}
		\item		
		Let $T$ be a primary monoid. There exists a strongly primary submonoid $(H,\mathfrak m)$ of $T$ such that $T=\widehat H$ and $(H\DP T)\ne\emptyset$ if and only if $T$ is  completely integrally closed. Moreover, in this case, the monoid $(H, \mathfrak m)$ can be chosen such that $\mathfrak m$ is a principal $s$-ideal of $T$, whence  $T=(\mathfrak m \DP \mathfrak m)$ and $\mathfrak m\subseteq (H\DP T)$. The  conditions of Theorem \ref{main} (a) are satisfied if and only if $T$  is a  discrete valuation monoid, and just in this case we may choose $H=T$.
		
		\item		
		Let $T$ be a primary  domain. There exists a strongly primary subring $(R,M)$ of $T$ such that $T=\widehat R$ and $(R\DP T)\ne(0)$ if and only if $T$ is  completely integrally closed. Moreover, in this case, the ring $(R, M)$ can be chosen such that $T=\widehat R$ and the ideal $MT$  of $T$ is generated by two elements.   If furthermore,  $T$ contains a field, then we may choose $R$ such  that  $M$ is a principal ideal of $T$, whence $T=(M\DP M)$ and $M\subseteq (R\DP T)$. The  conditions of Theorem \ref{main} (b) are satisfied if and only if $T$  is a  discrete valuation domain, and just in this case we may choose $R=T$.
		\end{enumerate}	
\end{proposition}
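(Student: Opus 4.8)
The plan is to prove each biconditional by separating the easy implication, which is abstract, from the hard one, which needs an explicit construction, and then to read off the discrete--valuation clause from Lemma~\ref{3.3}.2 together with Theorem~\ref{main}. I would first dispose of the ``only if'' direction of both (1) and (2) at once: if a strongly primary $H$ (resp.\ $R$) with $T=\widehat H$ (resp.\ $T=\widehat R$) and $(H\DP T)\ne\emptyset$ (resp.\ $(R\DP T)\ne(0)$) exists, then $(H\DP\widehat H)\ne\emptyset$, so Lemma~\ref{2.2}.2 immediately gives that $\widehat H=T$ is completely integrally closed; the domain case follows by applying this to $R^{\bullet}$.

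For the ``if'' direction of the monoid part, assume $T$ is completely integrally closed, use primality to fix a nonunit $t\in\mathfrak n=T\setminus T^{\times}$, and set $H:=T^{\times}\cup tT$. I would then run through the checklist: $H$ is a submonoid with $\mathsf q(H)=\mathsf q(T)$; since no element of $tT$ is a unit of $T$, one has $H^{\times}=T^{\times}$, hence $\mathfrak m=H\setminus H^{\times}=tT$ is a principal $s$-ideal with $(\mathfrak m\DP\mathfrak m)=(tT\DP tT)=(T\DP T)=T$; from $tT\subseteq H$ we get $\mathfrak m\subseteq(H\DP T)$, so $(H\DP T)\ne\emptyset$; and $\widehat H=T$, where $T\subseteq\widehat H$ comes from $t\,s^{n}\in tT\subseteq H$ for all $s\in T$ and $n\in\N$, while $\widehat H\subseteq\widehat T=T$ uses complete integral closedness of $T$. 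The only genuinely computational point is strong primarity: given $a=ts_{0}\in\mathfrak m$, primality of $T$ yields $m$ with $t^{m}\in s_{0}T$, whence $t^{m+1}r/s_{0}\in tT\subseteq H$ for every $r\in T$ and therefore $\mathfrak m^{m+2}=t^{m+2}T\subseteq aH$.

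The domain part (2) is parallel, and its construction is where the real work lies. When $T$ contains a field $F$ I would take $R:=F+tT$: it is a subring whose nonunits are exactly $M=tT$ (an element $f+tb$ with $0\ne f\in F$ is a unit, its inverse being again of the form $f^{-1}+tc$ with $c\in T$), so $M=tT$ is principal in $T$, and the verifications of $(R\DP T)\ne(0)$, $\widehat R=T$, $(M\DP M)=T$ and strong primarity of $R^{\bullet}$ transcribe the monoid computation verbatim. The main obstacle is the fieldless case: here $T$ has characteristic $0$ but residue characteristic $p>0$, so $p\in\mathfrak n$ and the localized prime ring $\Z_{(p)}$ lies in $T$; I would then replace $F$ by this discrete valuation subring, taking a suitable local subring $R$ containing $tT$ and built from $\Z_{(p)}$, show it is local with residue field a quotient of $\Z_{(p)}$ isomorphic to $\F_{p}$, and repeat the verification, arranging things so that the extension $MT$ is generated by two elements. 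Controlling locality, the identity $\widehat R=T$, and the number of generators of $MT$ simultaneously in the absence of a field is the delicate point.

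Finally, for the discrete--valuation clause in both parts I would argue uniformly. Since $(H\DP\widehat H)\ne\emptyset$ (resp.\ the analogue for $R$), the conditions of Theorem~\ref{main}(a) (resp.\ (b)) are all equivalent and amount to $\widehat H=T$ being a valuation monoid. Taking the strongly primary object to be $T$ itself is admissible exactly when $T$ is strongly primary, and, because $T$ is completely integrally closed, Lemma~\ref{3.3}.2 says this holds if and only if $T$ is a discrete valuation monoid (resp.\ domain); in that case $\widehat T=T$ is a valuation monoid, so the equivalent conditions of Theorem~\ref{main} are satisfied, while conversely the choice $H=T$ forces $T$ to be strongly primary and hence discrete.
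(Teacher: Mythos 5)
Your overall route coincides with the paper's own proof on every point you actually carry out. The necessity direction is the paper's (Lemma \ref{2.2}.2). Your monoid $H=T^{\times}\cup tT$ is, up to which units are included, the paper's $H=cT\cup\{1\}$, and your verifications (strong primarity via $t^{m}\in s_{0}T$, the identity $\widehat H=T$ from $tT\subseteq (H\DP T)$, and $\mathfrak m=tT$ principal with $(\mathfrak m\DP\mathfrak m)=T$) are the same computations. In the presence of a field, your $R=F+tT$ is exactly what the paper's construction $A=(F\cap T)+cT$, localized at $P=(F\cap N)+cT$, collapses to. For the discrete-valuation clause, your appeal to Lemma \ref{3.3}.2 does the same work as the paper's citation of Lemma \ref{3.3}.3; note that your reading of that clause (it concerns admissibility of the choice $H=T$, resp.\ $R=T$) is the only tenable one, since for the constructed $H$ the equivalent conditions of Theorem \ref{main} hold whenever $T$ is any valuation monoid, discrete or not (compare Example \ref{valnotdis}).

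The genuine gap is the fieldless case of item 2, i.e.\ characteristic $0$ with residue characteristic $p>0$, which you explicitly defer as ``the delicate point.'' This is not a routine variation of the field case: it is precisely where the statement's claim that $MT$ can be taken generated by two elements comes from, so as written a substantial part of item 2 remains unproved. The paper settles it as follows: $F\cap T=\Z_{(p)}$, where $F$ is the prime field of the quotient field (a local subring of $\Q$ --- two distinct non-invertible primes are impossible because $T$ is primary); set $A=\Z_{(p)}+cT$, $P=p\Z_{(p)}+cT$, $R=A_{P}$, $M=PA_{P}$. Then $P$ is maximal in $A$ since $A/P\cong\F_{p}$, so $R$ is local; every element of $A\setminus P$ has the form $a+cb$ with $a\in\Z_{(p)}^{\times}$, hence lies outside $N$ and is a unit of $T$, so $R\subseteq T$ and $MT=pT+cT$ is generated by two elements. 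Strong primarity then uses only that $MT$ is a finitely generated ideal of the primary domain $T$: for $0\ne x\in M$, applying primarity to each of the two generators gives $(MT)^{k}\subseteq cxT\subseteq xR$ for some $k$, whence $M^{k}\subseteq xR$; finally $cT\subseteq(R\DP T)$ yields $T\subseteq\widehat R\subseteq\widehat T=T$. Your sketch (``a suitable local subring built from $\Z_{(p)}$'') is compatible with this, but none of these verifications --- locality, $R\subseteq T$, the two-generator claim, strong primarity, $\widehat R=T$ --- is carried out in your proposal, so item 2 is incomplete as it stands.
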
 	

\begin{proof}
	By Lemma \ref{2.2}.2, if $H$ is a monoid such that $(H\DP \widehat H)\ne\emptyset$, then $\widehat H$ is completely integrally closed. Thus, in both statements, we just have to  prove the converse. We start with the proof of the second statement.
	
2.(i)
		Let $\mathfrak n=T\setminus T^{\times}$, and let $c$ be an element of $\mathfrak n$. Let  $H=cT\cup \{1\}$, and let $\mathfrak m=cT$. Thus $(H,\mathfrak m)$ is a submonoid of $(T,\mathfrak n)$.  Since $T$ is primary, if $x\in H$, then, $c^n\in xT$ for some integer $n\in \mathbb  N$. Thus $\mathfrak m^{n+1}c^{n+1}T\subseteq  xcT=c\mathfrak m$. It follows that $H$ is a strongly primary monoid. Also $(H:T)\supseteq \mathfrak m$. Hence $T\subseteq \widehat H$. Since $T$ is completely integrally closed, we obtain that $T=\widehat H$. For the last sentence see Lemma \ref{3.3}.3.

2.(ii)
		The domain $T$ is local since it is primary. Let $N$ be the maximal ideal of $T$. Let $c$ be a non-zero element of $N$, and let  $F$ be the prime field contained in the quotient field of $T$. Set
		$A=(F+cT)\cap T=(F\cap T)+cT$, thus $A$ is a subring of $T$. Let $P=A\cap N=(F\cap N)+cT$, so $P$ is a prime ideal of $A$. Set $R=A_P$, and $M=PA_P$. Thus $(R,M)$ is a local subring of $(T,N)$.
		
		If  $T$ contains a field (e.g., if $T$ has finite characteristic), then  $P=cT=PA_P=M$, thus $M$ is a principal ideal of $T$. Otherwise, $T$ has zero characteristic, and we may identify $F=\mathbb Q$. Thus $F\cap T$ is a local subring of $\mathbb Q$, that is, a localization of $\mathbb Z$ at a nonzero prime ideal. Hence $F\cap T$ is a discrete valuation domain. Let $d$ be a generator of the maximal ideal of $F\cap T$. Thus $MT=cT+dT$.
		
		  Let $x$ be a non-zero element of $M$, Since $T$ is primary,  and $MT$ is a finitely generated ideal of $T$, we have $(MT)^k\subseteq cxT\subseteq xR$ for some positive integer $k$. Hene $M^k\subseteq   xR$, so $(R,M)$ is strongly primary. Since $(0)\ne cT\subseteq (R\DP \widehat R)$, we infer that $T\subseteq \widehat R$. Since $T$ is completely integrally closed, we obtain that $T=\widehat R$.

For the last sentence of item 2. see Lemma \ref{3.3}.3.	

1. 	The first statement follows from the second one. Indeed,  we may use the monoid $H=R^{\bullet}$, so $\widehat H=T^{\bullet}$, where $R$ and $T$ are the domains in 2.
\end{proof}

\begin{example}\label{Nagata}~

\begin{enumerate}
\item
 	There is a  strongly primary  monoid $(H,\mathfrak m)$  such that    $\widehat H$ is primary completely integrally closed,  but  not a valuation monoid. Moreover, $\mathfrak m$ is a principal $s$-ideal of $\widehat H$, so   $\widehat H=(\mathfrak m \DP \mathfrak m)$, abd $\mathfrak m\subseteq (H\DP \widehat H)$. Thus none of the conditions of Theorem \ref{main} (a) holds, in particular $\Lambda(H)<\infty$.

\item
For any field $k$, there is a strongly primary domain $(R,M)$ containing $k$ such that   $\widehat R$ is a primary completely integrally closed domain,  but  not a valuation domain. Moreover, $M$ is a principal ideal of $\widehat R$, so $\widehat R=(M\DP M)$, and  $M\subseteq (R:\widehat R)$.  Thus none of the conditions of Theorem \ref{main} (b) holds, in particular $\Lambda(R)<\infty$.
\end{enumerate}
\end{example} 	

\begin{proof}
 First we prove the existence of a domain as in item 2.
 There exists a primary completely integrally closed domain $T$ containing $k$ that is not a valuation domain (we refer to \cite{Ri56b}, \cite{Na52a} and \cite{Na55a}, and briefly sketch the idea. Indeed, the field $K$ of Puiseux series over the algebraic closure $\overline k$ of $k$ is algebraically closed and it  has a discrete valuation that vanishes on $\overline k$  with value group $\mathbb  Q$. As follows from the cited papers, this fact implies the existence of a primary completely integrally closed domain containing $\overline k$). By Proposition \ref{existspm}.2.  there exists a subring $R$ of $T$ that is strongly primary and such that $T=\widehat R$ and $M$ is a principal ideal of $T$. Since $\widehat R$ is not a valuation domain, by Theorem \ref{main} (b), none of the nine conditions of this theorem are satisfied, in particular, $\Lambda(R)<\infty$.

As for item 1., we define $H=R^{\bullet}$.  so $\widehat H=T^{\bullet}$, where $R$ is the domain in item 2.
\end{proof}
	
\begin{example}\label{valnotdis}
There is a strongly primary  domain $(R,M)$ such that  $\widehat R$ is a primary valuation domain, but not  strongly primary and $(R\DP \widehat R)=M$ is a principal ideal of $\widehat R$. Thus all the conditions of Theorem \ref{main} (b) are satisfied, in particular, $\Lambda(R)=\infty$.
\end{example} 	
	
\begin{proof}
	Let $F$ be a field, $A=F[X^{q} \mid q \text{ rational}, q\ge1]$, and let $P$ be the  maximal ideal of $A$ generated by the set $\{X^{q} \mid q \text{ rational }, q\ge1\}$. We set $R=A_P$ and $M=PR_P$. Clearly, each nonzero element  $r\in R$ is of the form $r=vX^q$, where $q\ge1$ is rational and $v$ is a unit in $R$, whence $M^{\lceil q\rceil+1}\subseteq rR$. Thus $R$ is strongly primary. It is easy to show  that $\widehat R$ is equal to  $B_Q$, where $B=F[X^{q} \mid q \text{ rational }, q>0]$ and  $Q$ is the  maximal ideal of $B$ generated by the set $\{X^{q} \mid q \text{ rational }, q>0\}$. Each nonzero  element of $\widehat  R$ is of the form $uX^q$, where $q>0$ is rational and $u$ is a unit in $\widehat R$. Clearly,  $\widehat R$ is a valuation domain and $(R\DP \widehat R)=M=X\widehat R$.
\end{proof}

\medskip
As mentioned in Section \ref{1}, Krull monoids with finite class group and C-monoids are locally tame. Furthermore, finitely generated monoids are locally tame (\cite[Theorem 3.1.4]{Ge-HK06a}) and hence the same is true for Cohen-Kaplansky domains and their monoids of invertible ideals (\cite[Theorem 4.3]{An-Mo92}). On the other hand,  examples of monoids or domains, that are not locally tame,  are rare in the literature. Thus we end this section with a brief overview.

\begin{example} \label{3.13}~

1. In contrast to Theorem \ref{zerocond} and Theorem \ref{main} (b), there is a  strongly primary monoid $H$ with the following properties (see \cite[Proposition 3.7 and Example 3.8]{Ge-Ha-Le07}):
	\begin{enumerate}
		\item[(i)] $H$ is Mori with $(H \DP \widehat H) = \emptyset$ and $\widehat H$ is a discrete valuation monoid,
		
		\item[(ii)]  $\rho (H)=\Lambda (H) = \mathsf c (H)= \infty$ and $H$ is not locally tame.
\end{enumerate}
	Moreover, $H$ is a submonoid of a one-dimensional local noetherian domain, although $H$ is not the multiplicative monoid of a domain since $H$ is not locally tame.
	
	\smallskip
	2. For every $\alpha \in \mathbb R \setminus \mathbb Q$, the additive monoid $H_{\alpha} = \{ (x,y) \in \N^2 \mid y < \alpha x \} \cup \{(0,0)\} \subset (\N_0^2, +)$ is a root-closed primary BF-monoid which is neither Mori, nor strongly primary, nor locally tame (\cite[Example 4.7]{Ge-Ha08a}).
	
	\smallskip
	3. The additive monoid $H = \{(a,b,c) \in \N_0^3 \mid a > 0 \ \text{or} \ b=c \} \subseteq (\N_0^3, +)$ is Mori with catenary degree $\mathsf c (H) = 3$ (whence, by \eqref{catenary}, all sets of lengths are arithmetical progressions with difference $1$) but it is not locally tame (\cite[Example 1]{HK08b}).
	
\smallskip
4. The domain $R = \Q[X^2, X^3]$ is one-dimensional noetherian, $\overline R = \Q[X]$ is factorial, and $(R \DP \overline R) = (X^2)$. Nevertheless, $R$ fails to be locally tame (\cite[Example 6.11]{Ge-Ha08b}).

	\smallskip
	5. Let $H$ be a Krull monoid with infinite cyclic class group. Then $H$ is locally tame if and only if its catenary degree is finite if and only if its set of distances is finite (\cite[Theorem 4.2]{Ge-Gr-Sc-Sc10}).
	
	\smallskip
	6. Let $H$ be a Krull monoid with class group $G$ such that every class contains a prime divisor. Then $H$ is locally tame if and only if $H$ is globally tame if and only if $G$ is finite (\cite[Theorem 4.4]{Ge-Ha08a}).
\end{example}

\providecommand{\bysame}{\leavevmode\hbox to3em{\hrulefill}\thinspace}
\providecommand{\MR}{\relax\ifhmode\unskip\space\fi MR }
\providecommand{\MRhref}[2]{%
  \href{http://www.ams.org/mathscinet-getitem?mr=#1}{#2}
}
\providecommand{\href}[2]{#2}

\end{document}